\documentclass[11pt,reqno]{amsart}

\usepackage{amsmath,color}
\usepackage{amsthm}
\usepackage{amssymb}
\usepackage[all]{xy}
\hoffset -1.5 cm
\textwidth 15.0 cm
\thispagestyle{empty}

%\setlength\textwidth{15.5cm}

%\addtolength\oddsidemargin{-0.6cm}

%\addtolength\evensidemargin{-0.6cm}

%\numberwithin{equation}{section}

\newcounter{obs}

% THEOREM Environments ---------------------------------------------------

\newtheorem{theorem}{Theorem}[section]
\newtheorem{proposition}[theorem]{Proposition}
\newtheorem{corollary}[theorem]{Corollary}

\theoremstyle{definition}
\newtheorem{definition}[theorem]{Definition}
\newtheorem{example}[theorem]{Example}

\newtheorem{remark}[theorem]{Remark}

%%% ----------------------------------------------------------------------

\title[Domination spaces and factorization of operators]{Domination spaces and factorization of \\ linear and multilinear summing operators}

\author[E. Dahia, D. Achour, P. Rueda and E. A. S\'{a}nchez P\'{e}rez]
{E. Dahia, D. Achour, P. Rueda and E. A. S\'{a}nchez P\'{e}rez
}

\address{E. Dahia\\
Universit\'{e} Mohamed Boudiaf-M'sila , Laboratoire d'Analyse Fonctionnelle et G\'{e}om\'{e}trie des Espaces\\
28000 M'sila,
Algeria.}

\email{hajdahia@gmail.com}

\address{D. Achour\\
Universit\'{e} Mohamed Boudiaf-M'sila, Laboratoire d'Analyse Fonctionnelle et G\'{e}om\'{e}trie des Espaces\\
28000 M'sila,
Algeria.}
\email{dachourdz@yahoo.fr}

\address{P. Rueda\\
Departamento de An\'{a}lisis Matem\'{a}tico\\ Universidad de Valencia, 46100 Burjasot
Valencia. Spain.}

\email{pilar.rueda@uv.es}

\address{E. A. S\'{a}nchez P\'{e}rez\\
Instituto Universitario de Matem\'atica Pura y Aplicada,
Universitat Politecnica de Valencia\\ Camino de Vera s/n, 46022
Valencia. Spain.}

\email{easancpe@mat.upv.es}

\subjclass[2000]{Primary 47B10, Secondary 47L22, 46A32}

\keywords{Pietsch's domination
theorem, factorization of operators, multilinear operators
\\
\emph{Corresponding Author}. E. DAHIA, Laboratoire d'Analyse Fonctionnelle et G\'{e}om\'{e}trie des Espaces,
Universit\'{e} Mohamed Boudiaf-M'sila (Algeria). hajdahia@gmail.com
\\
The third author was supported  by the
Ministerio de Econom\'{\i}a y Competitividad (Spain) under grant
 MTM2011-22417.
The fourth  author was supported by the
Ministerio de Econom\'{\i}a y Competitividad (Spain) under grant
MTM2012-36740-C02-02.
}

%%% ----------------------------------------------------------------------

\begin{document}
\maketitle

\begin{abstract}
It is well known that not every summability property for multilinear operators leads to a factorization theorem. In this paper we undertake a detailed study of factorization schemes for summing linear and nonlinear operators. Our aim is to integrate under the same theory a wide family of  classes of mappings for which a Pietsch type factorization theorem holds.
Our construction includes the cases of absolutely $p$-summing linear operators, $(p,\sigma)$-absolutely continuous linear operators, factorable strongly $p$-summing multilinear operators, $(p_1,\ldots,p_n)$-dominated multilinear operators and dominated $(p_1,\ldots, p_n;\sigma)$-continuous multilinear operators.
\end{abstract}

\section{Introduction and preliminaries}

Domination and factorization properties of linear and non linear operators are usually restricted to expressions that involve the $p$-homogeneous scalar map $t \rightsquigarrow |t|^p$ and its inverse. Actually, even the most abstract approaches to the problem are based in this kind of expressions (see \cite{BPR10, PeSa,PeSaSe}). In \cite{BPR10} the $R$--$S$-abstract $p$-summing mappings were introduced with the aim of unifying the wide variety of summing classes of mappings that can be found in the literature. These abstract approaches provide  domination results that recover all the   known related dominations for classes of summing mappings in the settings of linear and multilinear operators, homogeneous polynomials, Lipschitz mappings and subhomogeneous mappings among others.  However, the main achievements to find abstract domination theorems inspired in absolutely summing operators are not accompanied by  general factorization theorems. It is  known that although for some  classes  of multilinear mappings a domination theorem holds, it is sometimes not clear whether this can be written as a standard factorization  (see \cite{Dim03, PeRuSa}). Some progress was made in \cite{PeRuSa,RuSa}, where the tandem domination/factorization for classes of summing polynomials and multilinear operators was orchestrated. In this paper we will show  a meaningful  set of vector norm inequalities that provide not only domination theorems, \textit{but also} the corresponding Pietsch factorization schemes. To do so, we introduce a general family of summability properties for linear and multilinear mappings, that are characterized by means of domination inequalities that \textit{always} lead to factorization theorems. The related   class of summing operators  is defined by means of a homogeneous mapping $\Phi$, and can be seen as a  subclass of $R$--$S$-abstract  $p$-summing mappings in the sense of \cite{BPR10}, that includes the classes of absolutely $p$-summing linear operators, $(p,\sigma)$-absolutely continuous linear operators, factorable strongly $p$-summing multilinear operators, $(p_1,\ldots,p_n)$-dominated multilinear operators and dominated $(p_1,\ldots, p_n;\sigma)$-continuous multilinear operators.
 Therefore, our aim is to study these domination properties related to $\Phi$,  and to establish the truth  of the relations
$$
\textit{Summability inequality} \, \Leftrightarrow \, \textit{Operator domination} \, \Leftrightarrow \, \textit{Operator factorization}
$$
for all these classes.

This paper is organized as follows. After this introduction and some
preliminaries, in Section 2 we present the factorization theorem through a
domination space for\ the class of linear operators which are defined by a
summability property involving a homogeneous map: the $\Phi$-abstract $p$-summing operators. We use techniques
inspired in the convexification of a domination inequality. In the third
section we find the factorization of two
categories of multilinear maps  by using suitable domination spaces: the strongly $\Phi$-abstract $p$-summing and the  $(\Phi _{1},...,\Phi _{m})$-abstract $(p_{1},...,p_{m})$-summing multilinear operators.
Finally, in the last section we study a multilinear
version of $(p,\sigma )$-absolutely continuous linear operators in order to
apply the technique of factorization  previously developed.

Let $m\in \mathbb{N}$ and $X_{1},...,X_{m},Y$
be Banach spaces over $\mathbb{K}$, (either $\mathbb{R}$ or $\mathbb{C}$). We will denote by $\mathcal{L}\left(
X_{1},...,X_{m};Y\right) $ the Banach space of all continuous $m$-linear
mappings from $X_{1}\times ...\times X_{m}$ to $Y$, under the norm
\begin{equation*}
\left\Vert T\right\Vert =\underset{x^{_{j}}\in B_{X_{j}}}{\sup }\left\Vert
T(x^{1},...,x^{m})\right\Vert ,
\end{equation*}
where $B_{X_{j}}$ denotes the closed unit ball of $X_{j}$ $(j=1,...,m).$ If $Y=\mathbb{K}$, we write $\mathcal{L}\left( X_{1},...,X_{m}\right).$ In
the case $X_{1}=...=X_{m}=X$, we will simply write $\mathcal{L}%
\left(^{m}X;Y\right),$ whereas $\mathcal L(X,Y)$ is the usual Banach space of all continuous linear operators from $X$ to $Y$.

By $X_{1}\widehat{\otimes }_{\pi }\cdots\widehat{\otimes }_{\pi }X_{m}$ we
denote the projective tensor product of $X_{1},\ldots,X_{m}.$ The
projective norm is defined by
\begin{equation*}
\pi (v)=\inf \left\{ \overset{n}{\underset{i=1}{\sum }}\overset{m}{\underset{
j=1}{\prod }}\left\Vert x_{i}^{j}\right\Vert ,n\in \mathbb{N},v=\overset{n}{
\underset{i=1}{\sum }}x_{i}^{1}\otimes \cdots\otimes x_{i}^{m}\right\}.
\end{equation*}

We consider the canonical continuous multilinear mapping
\begin{equation*}
\sigma _{m}:X_{1}\times \ldots \times X_{m}\longrightarrow X_{1}\widehat{%
\otimes }_{\pi }\cdots \widehat{\otimes }_{\pi }X_{m}
\end{equation*}%
defined by%
\begin{equation*}
\sigma _{m}(x^{1},...,x^{m})=x^{1}\otimes \cdots \otimes x^{m}
\end{equation*}
 for
all $(x^{1},\ldots ,x^{m})\in X_{1}\times \cdots \times X_{m}.$
Given $T\in \mathcal{L}(X_{1},\ldots ,X_{m};Y)$, consider its linearization $%
T_{L}:X_{1}\widehat{\otimes }_{\pi }\cdots \widehat{\otimes }_{\pi
}X_{m}\rightarrow Y,$ given by $T_{L}(x^{1}\otimes \cdots \otimes
x^{m})=T\left( x^{1},\ldots ,x^{m}\right) $ and extended by linearity.  It is well
known that $T=T_{L}\circ \sigma _{m}$ and the Banach space $\mathcal{L}%
(X_{1},\ldots ,X_{m};Y)$ is isometrically isomorphic to $ \mathcal{L}%
(X_{1}\widehat{\otimes }_{\pi }\cdots \widehat{\otimes }_{\pi }X_{m},Y)\ $%
through the correspondence$\ T_{L}\longleftrightarrow T.$\ For the general
theory of tensor products we refer to \cite{DF92,Rya02}.

Let $1\leq p< \infty $.
Let $\ell _{p}\left( X\right) $ be the Banach space of all absolutely $p$%
-summable sequences $\left( x_{i}\right) _{i=1}^{\infty }$ in the Banach space $X$ with the
norm
\begin{equation*}
\left\Vert \left( x_{i}\right) _{i=1}^{\infty }\right\Vert
_{p}=(\sum\limits_{i=1}^{\infty }\left\Vert x_{i}\right\Vert ^{p})^{\frac{1}{%
p}}.
\end{equation*}%
We denote by $\ell _{p,\omega }\left( X\right) $ the Banach space of all
weakly $p$-summable sequences $\left( x_{i}\right) _{i=1}^{\infty }$ in $X$
with the norm
\begin{equation*}
\left\Vert \left( x_{i}\right) _{i=1}^{\infty }\right\Vert _{p,\omega }=%
\underset{x^{\ast }\in B_{X^{\ast }}}{\sup }\left\Vert \left( \langle
x_{i},x^{\ast }\rangle \right) _{i=1}^{\infty }\right\Vert _{p}.
\end{equation*}%
Note that $\ell _{p,\omega }\left( X\right) =\ell _{p}\left( X\right) $ for
some $1\leq p<\infty $ if, and only if, $X$ is finite dimensional.

Let $X$, $Y$ and $E$ be (arbitrary) sets, $\mathcal{H}$ be a family of
mappings from $X$ to $Y$, $G$ be a Banach space and $K$ be a compact Hausdorff
topological space. Let
\[
R \colon K\times E \times G\longrightarrow\lbrack0,\infty)~\mathrm{and~} S
\colon{\mathcal{H}}\times E \times G\longrightarrow\lbrack0,\infty)
\]
be arbitrary mappings and $0<p<\infty$. A mapping
$f\in{\mathcal{H}}$ is said to be \textit{$R$-$S$-abstract $p$-summing} is
there is a constant $C>0$ so that%
\begin{equation}
\left(\sum_{j=1}^{m}S(f,x_{j},b_{j})^{p}\right)^{\frac1p}\leq C\sup_{\varphi\in K}\left(\sum_{j=1}%
^{m}R\left(  \varphi,x_{j},b_{j}\right)  ^{p}\right)^{\frac1p},\label{cam-errado}%
\end{equation}
for all $x_{1},\ldots,x_{m}\in E,$ $b_{1},\ldots,b_{m}\in G$ and
$m\in\mathbb{N}$.

Throughout all the paper we will say that a function $\Phi$ between Banach spaces $X$ and $Y$ is \textit{homogeneous} if for every positive real number $\lambda$, we have that for every $x \in X$, $\Phi(\lambda x)= \lambda \Phi(x)$. This is sometimes called to be positive homogeneous. We will say that it is \textit{bounded} if there is a constant $K$ such that for every $x \in X$, $\| \Phi(x)\| \le K \|x\|$.

To finish this introductory section, let us provide the definition and main properties on Banach function spaces on finite measure spaces and their $p$-th powers. Our main references are Chapter 2  (in particular Section 2.2) in \cite{oksan} and \cite[pp.28,51]{LiTz}.
 Let $(\Omega,\Sigma,\mu)$ be a complete finite measure space.
A real Banach space
$Z(\mu)$ of (equivalence classes of) $\mu$-measurable functions is a {\it Banach function space}  over $\mu$ if $Z(\mu) \subset L^1( \mu)$, it
contains all the simple functions and, if
 $\Vert\cdot\Vert_{Z(\mu)}$ is the norm of the space,  $g\in Z(\mu)$ and $f$ is a measurable function such that  $|f|\le|g|$ $\mu$--a.e., then $f\in
Z(\mu)$ and $\Vert f\Vert_{Z(\mu)}\le\Vert g\Vert_{Z(\mu)}$ (\cite[Def.1.b.17,
p.28]{LiTz}).
 We always have that
 $L^\infty(\mu)\subset Z(\mu)\subset L^1(\mu)$. %We say that a Banach function space is order continuous if decreasing positive  sequences  converging  $\mu$-a.e. to $0$ converge also in the norm.

%The  K\"othe dual $Z(\mu)'$ of a Banach function space $Z(\mu)$ is defined by  measurable functions $g$ that define functionals of the dual space that can be written as $\phi(f):= \int g f d \mu$, $f \in Z(\mu)$.
%If $Z(\mu)$ is order continuous, then $Z(\mu)'$ coincides isometrically with the dual of $Z(\mu)$.

Let $1 \le p < \infty$.
The $p$-th power  of $Z(\mu)$  is
defined as the set of functions
$$
Z(\mu)_{[p]}:=\{f \in L^0(\mu): |f|^{1/p} \in Z(\mu)\}
$$
that is a Banach function space over $\mu$  with
the norm $\|f\|_{Z(\mu)_{[p]}}:= \| |f|^{1/p} \|_{Z(\mu)}^p$, $f \in
Z(\mu)_{[p]}$ whenever  if $Z(\mu)$ is $p$-convex (with $p$-convexity constant equal to $1$). %It is order continuous if and only if $Z(\mu)$ is so (see \cite[Ch.2]{oksan}).
 If the measure $\mu$ is clearly fixed in the context, we will write $Z$ and $Z_{[p]}$ instead of $Z(\mu)$ and $Z(\mu)_{[p]}$ respectively.

We will use also some properties of the space $C(K)$ of all continuous functions on a compact Hausdorff topological space $K$.
Let $S$ be a subspace of  $C(K)$ and let $\mu$ be a probability Borel measure on $K$. Consider a Banach function space $Z$ over $\mu$. We will say that the identification map $S \to Z$ is well-defined if for every $f \in S$, the map that identifies $f$ with the equivalence class $[f] \in Z$ of all  functions that are $\mu$-a.e. equal to $f$, is well-defined and continuous. When we write $C(B_{X^*})$ for some Banach space $X$, $B_{X^*}$ is considered as a  compact space with the weak* topology.

\section{Factorization of $\Phi$-abstract $p$-summing linear operators}
\subsection{$\Phi$-abstract $p$-summing linear operators}

Recall that each Banach space $X$ can be considered (isometrically) as a subspace
$\widehat{X}$ of $C(B_{X^{\ast }})$ just by considering the elements $x$ of $X $ as functions $\langle x,\cdot \rangle: X^* \to \mathbb R$ defined as $\langle x,\cdot \rangle(x^*):= \langle x, x^*\rangle$, $x^* \in X^*$. Of course, here $B_{X^{\ast }}$ is considered as a compact subset with respect to the weak* topology, and so the elements of $\widehat{X}$ are continuous functions. Throughout the paper we will write
$i$ for the isometric embedding $i:X \to \widehat X \subset C(B_{X^{\ast }})$ given by $i(x)=\langle x, \cdot \rangle$, $x \in X$. Let $\Phi :\widehat{X} \rightarrow C(B_{X^{\ast }})$ be a homogeneous function
and let $1\leq p<\infty $.

\begin{definition} Let $X$ and $Y$ be Banach spaces.
A linear operator $T:X\longrightarrow Y$  is {\it $\Phi$-abstract $p$-summing} if there is a constant $C>0$ such that for each $%
x_{1},...,x_{n}\in X$\ and all $n\in \mathbb{N},$ we have%
\begin{equation*}
\left( \sum_{i=1}^{n}\left\Vert T(x_{i})\right\Vert ^{p}\right) ^{1/p}\leq
C\left\Vert \left( \sum_{i=1}^{n}\left\vert \Phi (\left\langle x_{i},\cdot
\right\rangle )\right\vert ^{p}\right) ^{\frac{1}{p}}\right\Vert
_{C(B_{X^{\ast }})}.
\end{equation*}
\end{definition}

Notice that if $\Phi $ is the inclusion map $\widehat{X}\hookrightarrow
C(B_{X^{\ast }})$, the definition gives the space of absolutely $p$-summing
operators. Let us develop in what follows some relevant examples. \newline

\begin{itemize}
\item[(1)] Matter \cite{Matt87} introduced the concept of $(p,\sigma )$-absolutely continuous linear operators $(1\leq
p<\infty ,0\leq \sigma <1)$, that was
developed later by L\'{o}pez Molina and the fourth author \cite{LS93} and extended in \cite{adsp14,YaAcRu}. A linear
operator $T:X\to Y$ is {\it $(p,\sigma )$-absolutely
continuous} if there is a positive
constant $C$ such that for all $n\in \mathbb{N}$ and $\left( x_{i}\right)
_{i=1}^{n}\subset X,$ we have
\begin{equation}
\left( \underset{i=1}{\overset{n}{\sum }}\left\Vert T(x_{i})\right\Vert ^{%
\frac{p}{1-\sigma }}\right) ^{\frac{1-\sigma }{p}}\leq C\underset{x^{\ast
}\in B_{X^{\ast }}}{\sup }\left( \underset{i=1}{\overset{n}{\sum }}\left(
\left\vert \left\langle x_{i},x^{\ast }\right\rangle \right\vert ^{1-\sigma
}\left\Vert x_{i}\right\Vert ^{\sigma }\right) ^{\frac{p}{1-\sigma }}\right)
^{\frac{1-\sigma }{p}} .  \label{Linabsc}
\end{equation}

\noindent  $(p,\sigma )$-Absolutely continuous
operators are $\Phi$-abstract $p$-summing  when  the function $\Phi $ is defined by $%
\Phi (\langle x,\cdot \rangle ):=\Vert x\Vert ^{\sigma }|\langle x,\cdot
\rangle |^{1-\sigma }$, $x\in X$. In particular, absolutely $p$-summing operators are recovered when $\sigma=0$. \newline

\item[(2)] Consider the function $\Phi :X\longrightarrow C(B_{X^{\ast
}})$ given by $\Phi (\langle x,\cdot \rangle ):=\frac{|\langle
x,\cdot \rangle |^{2}}{\Vert x\Vert },$ $x\in X.$ Take also $p=1$ and a linear operator $%
T:X\longrightarrow Y$. The summability inequality can be written as
\begin{equation*}
\sum_{i=1}^{n}\Vert T(x_{i})\Vert \leq C\left\Vert \sum_{i=1}^{n}\frac{%
|\langle x_{i},\cdot \rangle |^{2}}{\Vert x_{i}\Vert }\right\Vert
_{C(B_{X^{\ast }})}
\end{equation*}%
for each $x_{1},...,x_{n}\in X$. Note that an operator satisfying
such a domination property is always $1$-summing, since $\frac{|\langle
x,\cdot \rangle |^{2}}{\Vert x\Vert }\leq |\langle x,\cdot \rangle |$ for
all $x\in X$.
\newline

\item[(3)] Take $p=1$. Fix a norm one functional $x_{0}^{\ast }\in X^{\ast }$
and consider the function
\begin{equation*}
\Phi (\langle x,\cdot \rangle ):=|\langle x,x_{0}^{\ast }\rangle
|^{1/2}|\langle x,\cdot \rangle |^{1/2}.
\end{equation*}%
Note that, if $T$ satisfies such a domination, we obtain that

\begin{equation*}
\sum_{i=1}^{n}\Vert T(x_{i})\Vert \leq C\left\Vert \sum_{i=1}^{n}|\langle
x_{i},x_{0}^{\ast }\rangle |^{\frac{1}{2}}|\langle x_{i},\cdot \rangle |^{%
\frac{1}{2}}\right\Vert _{C(B_{X^{\ast }})}.
\end{equation*}

Applying \cite[Theorem 2.1]{PeSa} to this class of mappings,
 (see the explanation at the end of this subsection)  we
 find a regular Borel probability measure $\eta $ on $B_{X^{\ast }}$ (with the weak* topology) such that
\begin{equation*}
\Vert T(x)\Vert \leq C|\langle x,x_{0}^{\ast }\rangle
|^{1/2}\int_{B_{X^{\ast }}}|\langle x,\cdot \rangle |^{1/2}\,d\eta \leq
\frac{1}{2}C\big(|\langle x,x_{0}^{\ast }\rangle |+\int_{B_{X^{\ast
}}}|\langle x,\cdot \rangle |\,d\eta \big)
\end{equation*}%
\begin{equation*}
=C\int_{B_{X^{\ast }}}|\langle x,\cdot \rangle |\,d\big(\frac{\delta
_{x_{0}^{\ast }}+\eta }{2}\big).
\end{equation*}
where $\delta _{x_{0}^{\ast }}$ is the Dirac's delta associated with $
x_{0}^{\ast }\in B_{X^{\ast }}$.
In particular, this shows that this class is contained in the class on
absolutely summing operators.
\end{itemize}

The $\Phi$-abstract $p$-summing  operators form a subclass of $R$--$S$-abstract $p$-summing mappings. To see this, just consider Banach spaces $X$ and $Y$, $E=X$, $\mathcal H=\mathcal L(X,Y)$, $G=\mathbb R$, $K=B_{X^*}$ endowed with the weak* topology, and define
$$
R:B_{X^*}\times X \times \mathbb R\to [0,\infty) \ \mbox{ and } \ S:\mathcal L(X,Y)\times X\times \mathbb R\to [0,\infty)
$$
by $R(\phi,x,b):=|\Phi(\langle x,\phi\rangle )||b|$ and $S(T,x,b):=\|T(x)\| |b|$.

An application of \cite[Theorem 2.2]{BPR10} gives that an operator $T:X\to Y$ is $\Phi$-abstract $p$-summing if and only if there exists a constant $C>0$ and a regular Borel probability measure $\mu$ on $B_{X^*}$ such that
$$
\|T(x)\|  \leq C\left(\int_{B_{X^*}} |\Phi(\langle x,x^*\rangle )|^p d\mu(x^*)\right)^{1/p}
$$
 for all $x\in X$.

\subsection{Domination spaces}

Let $\Phi :\widehat{X}\rightarrow C(B_{X^{\ast }})$ be a
homogeneous function. Let $Z$ be
a Banach space and suppose that there is a (norm one) continuous
linear operator $j:span\{\Phi(\widehat{X}) \}\rightarrow Z$. We define on $\widehat{X}$ a seminorm associated to $\Phi$ and $Z$ as
$$
\|\langle x,\cdot\rangle \|_{\Phi,Z}:= \inf  \sum_{i=1}^r \|j\circ \Phi(\langle x_i, \cdot \rangle)\|_Z,
$$
where the infimum is computed over all decompositions of $x$ as $x= \sum_{i=1}^r x_i$, $x, x_i \in X$. In what follows we will  assume always that this seminorm is continuous with respect to the norm of
$\widehat{X}$.

Consider the subspace $N:=\{\langle x,\cdot \rangle\in \widehat{X}: \| \langle x,\cdot \rangle \|_{\Phi,Z}=0\} \subseteq \widehat{X}$.
The completion of the corresponding quotient space $\widehat{X}/N$ is what we call the {\it domination space $\widehat{X}_{\Phi}^Z$.}

Let us consider now the quotient map $i_{\widehat{X}_{\Phi}^Z}: \widehat X \to \widehat{X}_{\Phi}^Z$, that is defined as $i_{\widehat{X}_{\Phi}^Z}( \langle x,\cdot \rangle):= [\langle x,\cdot \rangle]$, $\langle x, \cdot \rangle \in \widehat{X}$, where $[ \langle x,\cdot \rangle ]$ denotes the equivalence class of the continuous function $\langle x,\cdot \rangle$ in $\widehat{X}/N$. Under the assumption  that $\Phi$ is bounded, the map  $i_{\widehat{X}_{\Phi}^Z}$ is continuous whenever we consider $\widehat X $ endowed with the supremum norm induced by $C(B_{X^*})$. Indeed, if $x \in X$ then
\begin{eqnarray*}
\|i_{\widehat{X}_{\Phi}^Z}(\langle x,\cdot\rangle)\|_{\widehat{X}_{\Phi}^Z}&=& \|[\langle x,\cdot\rangle]\|_{\widehat{X}_{\Phi}^Z}=  \|\langle x,\cdot\rangle\|_{\Phi,Z}\\
&\leq & \|j\circ \Phi (\langle x,\cdot\rangle )\|_Z \leq \|\Phi(\langle x,\cdot\rangle )\|_{C(B_{X^*})}\\
&\leq & K \|\langle x,\cdot\rangle \|_{C(B_{X^*})},\\
\end{eqnarray*}
where $K$ is the constant that comes from the boundedness of $\Phi$.

Let $T:X \to Y$ be a continuous linear  operator. Having in mind the classical Pietsch factorization scheme for $p$-summing operators,  the construction of the space $\widehat{X}_{\Phi}^Z$
aims to define a factorization scheme for $T$ associated to $Z$ and $\Phi$ as the one that follows, provided that some suitable  inequality holds. Indeed, we will show that the diagram
$$
\xymatrix{
X \ar[rr]^{T} \ar@{>}[d]_{i} & & Y\\
\widehat X \ar@{>}[rr]^{i_{\widehat{X}_{\Phi}^Z}} &  & \widehat{X}_{\Phi}^Z  \ar@{>}[u]_{\widehat T}}
$$
is commutative whenever the operator $T$ satisfies an adequate inequality that assures the continuity of $\widehat T$.

The main example of this kind of factorization is the classical diagram that holds for $p$-summing operators.  The classical Pietsch domination theorem asserts that an operator $T:X\to Y$ is absolutely $p$-summing if, and only if, there exists a constant $C>0$ such that
$$
\|T(x)\|\leq C\left( \int_{B_{X^*}}|\langle x, x^* \rangle|^p d\mu(x^*)\right)^{1/p}
$$
for all $x\in X$. This domination by the $L^p$-norm allows to achieve the well-known factorization theorem for $p$-summing operators through a subspace of $L^p(\mu)$ for the Pietsch measure $\mu$. In general, it is expected that the existence of a domination by a norm of a certain  space provides a factorization theorem. Our aim is to show that this is the case for $\Phi$-abstract $p$-summing operators. In the multilinear case ---as it will be shown in the next section---  a domination like this does not lead necessarily to a factorization of the multilinear map.

Let us provide other examples of summing inequalities that in fact lead to  $p$-summing type factorization schemes. Following the standard definition, the operator $i_{\widehat{X}_{\Phi}^Z}: \widehat X  \to \widehat{X}_{\Phi}^Z$ is $p$-concave if there  is $C>0$ such that for $\langle x_1, \cdot \rangle,...,\langle x_n, \cdot \rangle \in \widehat X$,

\[
\left( \sum_{i=1}^{n}\Vert \langle x_{i},\cdot \rangle \Vert _{\widehat{X}_{\Phi}^Z}^{p}\right) ^{1/p}\leq C\left\Vert \left( \sum_{i=1}^{n}|\langle
x_{i},\cdot \rangle |^{p}\right) ^{\frac{1}{p}}\right\Vert _{C(B_{X^{\ast
}})}.
\]

The following result provides a characterization of $p$-summing operators using our tools.

\begin{proposition} Let $T:X \to Y$ be an operator and  let $1 \le p < \infty.$
The following statements  are equivalent.
\begin{itemize}

\item[(1)] There is a homogeneous  map $\Phi: \widehat X \to C(B_{X^*})$, a Banach lattice  $Z$  such that $i_{\widehat{X}_{\Phi}^Z}$ is $p$-concave and a continuous linear operator $\widehat T: \widehat{X}_{\Phi}^Z \to Y$ such that
$T= \widehat T \circ i_{\widehat{X}_{\Phi}^Z} \circ i.$

\item[(2)]  $T$ is $p$-summing.
\end{itemize}
\end{proposition}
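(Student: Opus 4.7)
The plan is to prove the two implications separately, with the nontrivial ingredient being the classical Pietsch domination theorem for absolutely $p$-summing operators.

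For $(1) \Rightarrow (2)$: assuming $T = \widehat T \circ i_{\widehat X_\Phi^Z} \circ i$ with $i_{\widehat X_\Phi^Z}$ being $p$-concave with constant $C$, I would chain the inequalities for any $x_1,\ldots,x_n \in X$:
\begin{align*}
\Bigl(\sum_{k=1}^n \|T(x_k)\|^p\Bigr)^{1/p}
&\leq \|\widehat T\| \Bigl(\sum_{k=1}^n \|i_{\widehat X_\Phi^Z}(\langle x_k,\cdot\rangle)\|_{\widehat X_\Phi^Z}^p\Bigr)^{1/p}\\
&\leq C\,\|\widehat T\| \, \Bigl\|\Bigl(\sum_{k=1}^n |\langle x_k,\cdot\rangle|^p\Bigr)^{1/p}\Bigr\|_{C(B_{X^*})}.
\end{align*}
The right-hand side equals $C\|\widehat T\|\sup_{x^*\in B_{X^*}}(\sum_k|\langle x_k,x^*\rangle|^p)^{1/p}$, which is the standard $p$-summing expression, so $T$ is $p$-summing.

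For $(2) \Rightarrow (1)$: I would invoke Pietsch's theorem to get a regular Borel probability measure $\mu$ on $B_{X^*}$ and a constant $C>0$ with $\|T(x)\| \leq C(\int |\langle x,x^*\rangle|^p\, d\mu(x^*))^{1/p}$ for all $x\in X$. The natural choice is to take $\Phi$ as the canonical inclusion $\widehat X \hookrightarrow C(B_{X^*})$, $Z := L^p(\mu)$ (a Banach lattice), and $j$ the restriction to $\widehat X$ of the canonical norm-one map $C(B_{X^*}) \to L^p(\mu)$ (norm one because $\mu$ is a probability). With this, Pietsch's inequality rewrites as $\|T(x)\| \leq C\|\langle x,\cdot\rangle\|_{L^p(\mu)}$, so once I have identified $\widehat X_\Phi^Z$ correctly the continuous extension $\widehat T:\widehat X_\Phi^Z \to Y$ exists by density and completeness of $Y$.

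The main obstacle is to identify $\widehat X_\Phi^Z$ and verify the $p$-concavity condition. The key observation is that since both $\Phi$ and $j$ are linear in this choice, the seminorm collapses to the $L^p(\mu)$-norm: for any decomposition $x = \sum_k x_k$, the triangle inequality in $L^p(\mu)$ gives $\|\langle x,\cdot\rangle\|_{L^p(\mu)} \leq \sum_k\|\langle x_k,\cdot\rangle\|_{L^p(\mu)}$, while the trivial decomposition $x = x$ achieves this lower bound, yielding $\|\langle x,\cdot\rangle\|_{\Phi,Z}=\|\langle x,\cdot\rangle\|_{L^p(\mu)}$. Thus $\widehat X_\Phi^Z$ sits isometrically inside $L^p(\mu)$, and $p$-concavity of $i_{\widehat X_\Phi^Z}$ is then immediate from Fubini together with the pointwise bound $\|\cdot\|_{L^p(\mu)} \leq \|\cdot\|_{C(B_{X^*})}$:
$$\Bigl(\sum_k \|\langle x_k,\cdot\rangle\|_{L^p(\mu)}^p\Bigr)^{1/p} = \Bigl(\int \sum_k |\langle x_k,x^*\rangle|^p\, d\mu\Bigr)^{1/p} \leq \Bigl\|\Bigl(\sum_k |\langle x_k,\cdot\rangle|^p\Bigr)^{1/p}\Bigr\|_{C(B_{X^*})}.$$
The remaining verifications are bookkeeping.
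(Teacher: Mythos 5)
Your proposal is correct and follows essentially the same route as the paper: for $(1)\Rightarrow(2)$ the paper likewise observes that the $p$-concavity inequality for $i_{\widehat{X}_{\Phi}^Z}$ is exactly the $p$-summing inequality, which composition with the bounded operator $\widehat T$ preserves; for $(2)\Rightarrow(1)$ the paper also takes $\Phi$ to be the identity inclusion and $Z=L^p(\eta)$ from the Pietsch factorization. Your write-up merely supplies the details (collapse of the seminorm to the $L^p$-norm, the probability-measure estimate giving $p$-concavity) that the paper leaves implicit.
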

\begin{proof}
To prove  (1) $\Rightarrow$ (2), note that $p$-concavity of $i_{\widehat{X}_{\Phi}^Z}$ implies that it is $p$-summing. For the converse, just notice that the Pietsch factorization of the $p$-summing operator gives (1) for $Z= L^p(\eta)$ for a certain Borel probability measure $\eta$ and $\Phi$ just the identity.
\end{proof}

This easy characterization opens the door to easy sufficient conditions that come from our abstract factorization diagram for $T$ to be $p$-summing. Let $T$ be an operator that factors through $\widehat{X}_{\Phi}^Z$ as above.

\begin{itemize}
\item {If $\widehat{X}_{\Phi}^Z$ is $p$-concave then $T$ is $p$-summing, since $ i_{\widehat{X}_{\Phi}^Z}$ is a positive operator (see \cite{LiTz}). }

\item {If $Z$ is $p$-concave and $|\Phi ( \langle x, \cdot \rangle)| \le C |\langle x, \cdot \rangle|$ for all $x \in X$ and a constant $C>0$, then $T$ is $p$-summing.}

\item  {If $Z$ is $p$-concave and $ \|( \sum_{i=1}^n |\Phi ( \langle x_i, \cdot \rangle)|^p )\|^{1/p} \le C
 \|( \sum_{i=1}^n  |\langle x_i, \cdot \rangle|^p )^{1/p}\|$
for each finite family $x_1,...,x_n \in X$, then $T$ is $p$-summing.}

\end{itemize}

The next result gives the domination characterization of our family of summing operators.

\begin{proposition} \label{1}
Let $T:X \to Y$ be an operator between Banach spaces and let
$\Phi: \hat X \to C(B_{X^*})$ be a   homogeneous map. Let be
\begin{itemize}
\item[(i)] either $Z=C(B_{X^*})$,

\item[(ii)] or
  $Z=Z(\mu)$,  a $p$-convex Banach function space over $\mu$  ---where  $\mu$ is a Borel probability measure on $B_{X^*}$--- such that the inclusion  map $C(B_{X^*}) \to Z$ is well-defined.
  \end{itemize}
The following assertions are equivalent.

\begin{itemize}

\item[(1)] For each finite set $x_1,...,x_n \in X,$
$$
\left( \sum_{i=1}^n \| T(x_i)\|^p \right)^{1/p} \le \left\| \left( \sum_{i=1}^n | \Phi( \langle x_i, \cdot \rangle)|^p \right)^{1/p} \right\|_{Z}.
$$

\item[(2)] There is a positive functional $z^*_p$ in the topological  dual of the $p$-th power of $Z$ such that for each  $x \in X,$
$$
\|T(x)\| \le \big\langle |\Phi(\langle x, \cdot \rangle)|^p, z^*_p  \big\rangle^{1/p}.
$$

\end{itemize}

\end{proposition}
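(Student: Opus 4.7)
The plan is to establish (2)$\Rightarrow$(1) by a straightforward summation argument and (1)$\Rightarrow$(2) by a Ky Fan-type separation on the weak$^*$-compact convex set of positive contractive functionals on $Z_{[p]}$.

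For (2)$\Rightarrow$(1), I would raise the inequality in (2) to the $p$-th power, sum over $i=1,\dots,n$, bring the sum inside $z_p^*$ by linearity, and apply the dual bound $\langle f,z_p^*\rangle\le \|z_p^*\|\,\|f\|_{Z_{[p]}}$ to the positive element $f=\sum_i|\Phi(\langle x_i,\cdot\rangle)|^p$, remembering that $\|f\|_{Z_{[p]}}^{1/p}=\||f|^{1/p}\|_{Z}$. Under the normalization $\|z_p^*\|\le 1$ that the reverse implication will furnish, this reproduces (1) verbatim; otherwise one absorbs the constant $\|z_p^*\|^{1/p}$ into the functional.

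For (1)$\Rightarrow$(2) I would set
\[
K=\{z^*\in (Z_{[p]})^*: z^*\ge 0,\ \|z^*\|\le 1\},
\]
which is weak$^*$-compact and convex, and for each finite tuple $(x_1,\dots,x_n)$ in $X$ define the affine weak$^*$-continuous function on $K$
\[
\psi_{(x_1,\dots,x_n)}(z^*)=\sum_{i=1}^n\|T(x_i)\|^p-\Big\langle\sum_{i=1}^n|\Phi(\langle x_i,\cdot\rangle)|^p,z^*\Big\rangle.
\]
Let $\Psi$ be the family of all such functions. I would then verify the two ingredients of Ky Fan's lemma. First, $\Psi$ is closed under convex combinations: linearity of $T$ and positive homogeneity of $\Phi$ give $\lambda\|T(x)\|^p=\|T(\lambda^{1/p}x)\|^p$ and $\lambda|\Phi(\langle x,\cdot\rangle)|^p=|\Phi(\langle\lambda^{1/p}x,\cdot\rangle)|^p$ for $\lambda\ge 0$, so any convex combination of tuple functions reassembles as $\psi$ of a concatenated, rescaled tuple. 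Second, $\inf_{z^*\in K}\psi\le 0$ for each $\psi\in\Psi$: since $Z_{[p]}$ is a Banach lattice (case (i) reduces to $C(B_{X^*})_{[p]}=C(B_{X^*})$ isometrically; case (ii) uses that $p$-convexity makes $Z_{[p]}$ a Banach function space with the stated norm) and $f=\sum_i|\Phi(\langle x_i,\cdot\rangle)|^p$ is a positive element of $Z_{[p]}$ (via $|\Phi(\langle x_i,\cdot\rangle)|\in C(B_{X^*})\subset Z$), Banach-lattice duality gives $\sup_{z^*\in K}\langle f,z^*\rangle=\|f\|_{Z_{[p]}}$ with the supremum attained by weak$^*$-compactness of $K$; combined with (1) rewritten as $\sum\|T(x_i)\|^p\le\|f\|_{Z_{[p]}}$, this yields $\min_K\psi\le 0$.

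Ky Fan's lemma then produces a single $z_0^*\in K$ with $\psi(z_0^*)\le 0$ for every $\psi\in\Psi$; specializing to the singleton tuple $(x)$ gives exactly $\|T(x)\|^p\le\langle|\Phi(\langle x,\cdot\rangle)|^p,z_0^*\rangle$, which is (2). The main technical obstacle is treating cases (i) and (ii) uniformly: one must justify that $|\Phi(\langle x,\cdot\rangle)|^p$ genuinely lies in the Banach lattice $Z_{[p]}$ and that the positive-cone duality formula $\sup_K\langle f,\cdot\rangle=\|f\|_{Z_{[p]}}$ is valid in both settings. Case (i) is the usual Riesz representation theorem, whereas case (ii) is precisely where the $p$-convexity hypothesis on $Z$ and the well-definedness of the identification $C(B_{X^*})\to Z$ earn their keep.
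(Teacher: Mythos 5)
Your proposal is correct and follows essentially the same route as the paper: rewrite (1) via the identity $\|(\sum_i|\Phi(\langle x_i,\cdot\rangle)|^p)^{1/p}\|_Z=\|\sum_i|\Phi(\langle x_i,\cdot\rangle)|^p\|_{Z_{[p]}}^{1/p}$ (valid since $p$-convexity with constant $1$ makes $Z_{[p]}$ a Banach function space, and trivially in case (i)), apply Ky Fan's lemma to the concave family of affine weak$^*$-continuous functions $\psi$ using the positive homogeneity of $\Phi$ to close the family under convex combinations, and prove the converse by a direct computation. If anything, your version is slightly more careful than the paper's sketch, since restricting the compact convex set to the positive part of the dual unit ball makes the positivity of $z_p^*$ required in (2) explicit rather than leaving it implicit in the phrase ``a standard argument.''
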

\begin{proof}
To prove  (1) $\Rightarrow$ (2) use the fact that each $p$-convex Banach function space has the property that its $p$-th power $Z_{[p]}$  is (maybe after renorming) a Banach function space. If $Z=C(B_{X^*})$, then simply notice that the $p$-th power of $Z$ coincides with $Z$. We may assume w.l.o.g. that the $p$-convexity constant of $Z$ is $1$, and so the quasi-norm given in the definition of the $p$-th power is a norm. Furthermore,
$$
\left\|\left( \sum_{i=1}^n | \Phi( \langle x_i, \cdot \rangle)|^p \right)^{1/p} \right\|_{Z}=\left\| \sum_{i=1}^n |\Phi(\langle x_i, \cdot \rangle)|^p \right\|_{Z_{[p]}}^{1/p},
$$
for each finite set $x_1,\ldots,x_n \in X$, and then the inequality in (1) can be written as
$$
 \sum_{i=1}^n \| T(x_i)\|^p  - \sup_{g \in B_{(Z_{[p]})^*}} \left\langle  \sum_{i=1}^n |\Phi( \langle x_i, \cdot \rangle)|^p , g  \right\rangle \le 0.
$$
Consider all functions given by
$$
\psi(g):=
 \sum_{i=1}^n \| T(x_i)\|^p   -  \left\langle  \sum_{i=1}^n |\Phi( \langle x_i, \cdot \rangle)|^p , g  \right\rangle,
$$
$g \in B_{(Z_{[p]})^*}.$
A standard argument using the homogeneity of $\Phi$ and Ky Fan's Lemma (see, e.g. \cite[Lemma 6.12]{oksan}) for the concave family of all these convex w*-continuous functions gives the result. The converse is proved by a direct computation.
\end{proof}

Note that for $0 \le z^*_p \in (Z_{[p]})^*$, the expression  $f \mapsto \big\langle|f|^p, z^*_p  \big\rangle^{1/p}$, $f \in C(B_{X^*})$ defines an abstract $p$-seminorm, that can be used together with $\Phi$ to define an ``$L^p$-type"  domination space. The next result shows that in fact this is the canonical domination space, and any other domination defined by another Banach function space $Z$ leads to a factorization through an $L^p$-type domination space. We show in the next result that
 the requirement on the existence of a continuous identification map $C(B_{X^*}) \to Z$ always implies such a factorization.

% Recall that an abstract $p$-seminorm is a seminorm $q:C(B_{X^*}) \to \mathbb R$ such that for each $f_1,...,f_n$,
%$$
%\big( \sum_{i=1}^n q(f_i)^p \big)^{1/p} =  q \big( (\sum_{i=1}^n |f_i|^p )^{1/p} \big).
%$$
%When $q$ is given by a seminorm that comes from an $L^p$-space, we simply say that $q$ is a $L^p$-seminorm.

\begin{theorem} \label{pcoro}
Let $T:X \to Y$ be an operator between Banach spaces and let
$\Phi$ a bounded   homogeneous map $\Phi: \widehat X \to C(B_{X^*})$.
The following assertions are equivalent.

\begin{itemize}

\item[(1)] There is a  constant $K>0$ and a Banach function space $Z$ over a finite Borel measure $\mu$ on $B_{X^*}$ such that the identification  map $C(B_{X^*}) \to Z$ is well-defined (and so continuous), and for each finite set $x_1,...,x_n \in X,$
$$
\left( \sum_{i=1}^n \| T(x_i)\|^p \right)^{1/p} \le K \left\| \left( \sum_{i=1}^n |\Phi(\langle x_i, \cdot \rangle)|^p \right)^{1/p} \right\|_{Z}.
$$

\item[(2)] The operator $T$ is $\Phi$-abstract $p$-summing, that is, there is a constant $K>0$ such that  for each finite set $x_1,...,x_n \in X,$
$$
\left( \sum_{i=1}^n \| T(x_i)\|^p \right)^{1/p} \le K\left\| \left( \sum_{i=1}^n |\Phi(\langle x_i, \cdot \rangle)|^p \right)^{1/p} \right\|_{C(B_{X^*})}.
$$

\item[(3)] There is  a constant $K>0$ and a Borel probability   measure $\eta$ on $B_{X^*}$  such that for each  $x \in X,$
$$
\|T(x)\| \le K \big( \int_{B_{X^*}}  |\Phi(\langle x, \cdot \rangle)|^p \, d \eta \big)^{1/p}, \quad x \in X.
$$

\item[(4)] There is a  domination space $ (\widehat{X})_{\Phi }^{L^{p}}$ defined by $\Phi$ and a space $L^p(\eta)$ ---for a probability Borel measure  $\eta$ on $B_{X^*}$---, and a continuous linear operator $\widehat T:  (\widehat{X})_{\Phi }^{L^{p}}\to Y $ such that  $T= \widehat T \circ i_{ (\widehat{X})_{\Phi}^{L^{p}}} \circ i$, i.e. the following diagram commutes
$$
\xymatrix{
X \ar[rr]^{T} \ar@{>}[d]_{i} & & Y\\
\widehat X \ar@{>}[rr]^{i_{ (\widehat{X})_{\Phi}^{L^{p}}}} &  & (\widehat{X})_{\Phi}^{L^{p}} \,  \ar@{>}[u]_{\widehat T}} \ \ .
$$

%\item[(5)] There is a $p$-convex  Banach lattice $Z$ such that the identification map $C(B_{X^*}) \to Z$ is well-defined (and so continuous), and for each finite set $x_1,...x_n \in X,$
%$$
%\left( \sum_{i=1}^n \| T(x_i)\|^p \right)^{1/p} \le \left\| \left( \sum_{i=1}^n |\Phi(\langle x_i, \cdot \rangle)|^p \right)^{1/p} \right\|_{Z}.
%$$
\end{itemize}
\end{theorem}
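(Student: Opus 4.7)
The plan is to close the chain $(1)\Rightarrow(2)\Rightarrow(3)\Rightarrow(1)$ together with the equivalence $(3)\Leftrightarrow(4)$. The comparisons $(1)\Rightarrow(2)$, $(3)\Rightarrow(1)$ and $(4)\Rightarrow(3)$ are essentially direct norm estimates, $(2)\Rightarrow(3)$ is a direct appeal to Proposition~\ref{1} applied to $Z=C(B_{X^*})$, and the genuinely new content is the construction in $(3)\Rightarrow(4)$, where the domination space plays its role.

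For $(1)\Rightarrow(2)$, the continuity of the identification $C(B_{X^*})\to Z$ yields $M>0$ with $\|f\|_Z\le M\|f\|_{C(B_{X^*})}$ for every $f\in C(B_{X^*})$; applied to the function $\bigl(\sum_i|\Phi(\langle x_i,\cdot\rangle)|^p\bigr)^{1/p}\in C(B_{X^*})$ inside the right-hand side of (1), this gives (2) with constant $KM$. For $(2)\Rightarrow(3)$, apply Proposition~\ref{1} with $Z=C(B_{X^*})$: its $p$-th power coincides with $C(B_{X^*})$ itself, so positive functionals on $Z_{[p]}$ are by the Riesz representation theorem integration against positive finite regular Borel measures on $B_{X^*}$; rescaling such a measure to a probability $\eta$ and absorbing the normalization factor into the constant produces (3). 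For $(3)\Rightarrow(1)$, take $Z=L^p(\eta)$ for the probability $\eta$ given by (3); since $\eta(B_{X^*})=1$, the identification $C(B_{X^*})\to L^p(\eta)$ is well defined and continuous with norm one, and raising (3) to the $p$-th power, summing, and using Fubini yields
\[
\sum_{i=1}^n\|T(x_i)\|^p\le K^p\int\sum_{i=1}^n|\Phi(\langle x_i,\cdot\rangle)|^p\,d\eta=K^p\left\|\Bigl(\sum_{i=1}^n|\Phi(\langle x_i,\cdot\rangle)|^p\Bigr)^{1/p}\right\|_{L^p(\eta)}^p,
\]
which is (1) with the same constant.

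The main step is $(3)\Rightarrow(4)$. Taking $Z=L^p(\eta)$ and letting $j:\mathrm{span}\{\Phi(\widehat{X})\}\to L^p(\eta)$ be the natural inclusion (of norm at most one, since $\eta$ is a probability), form the domination space $(\widehat{X})_{\Phi}^{L^{p}}$ as in the preceding subsection. For any finite decomposition $x=\sum_{i=1}^r x_i$ in $X$, the triangle inequality in $Y$ together with (3) applied to each $x_i$ gives
\[
\|T(x)\|\le\sum_{i=1}^r\|T(x_i)\|\le K\sum_{i=1}^r\|j\circ\Phi(\langle x_i,\cdot\rangle)\|_{L^p(\eta)}.
\]
Passing to the infimum over decompositions yields $\|T(x)\|\le K\|\langle x,\cdot\rangle\|_{\Phi,L^{p}(\eta)}$. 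Hence $T$ annihilates the kernel $N$ of the seminorm and descends to a well-defined bounded linear map $\widehat{X}/N\to Y$ of norm at most $K$, which extends by density to the required continuous operator $\widehat T:(\widehat{X})_{\Phi}^{L^{p}}\to Y$ making the displayed diagram commute. The converse $(4)\Rightarrow(3)$ is immediate from the trivial one-term decomposition $x=x$, which gives $\|\langle x,\cdot\rangle\|_{\Phi,L^{p}(\eta)}\le\|j\circ\Phi(\langle x,\cdot\rangle)\|_{L^p(\eta)}=\bigl(\int|\Phi(\langle x,\cdot\rangle)|^p\,d\eta\bigr)^{1/p}$, so that $\|T(x)\|\le\|\widehat T\|\|\langle x,\cdot\rangle\|_{\Phi,L^{p}(\eta)}$ yields (3) with constant $\|\widehat T\|$.

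The main obstacle is the verification in $(3)\Rightarrow(4)$ that $T$ descends through the quotient by the kernel of the seminorm and extends continuously to the completion $(\widehat{X})_{\Phi}^{L^{p}}$; this hinges on the estimate $\|T(x)\|\le K\|\langle x,\cdot\rangle\|_{\Phi,L^{p}(\eta)}$ obtained via the decomposition argument above, which is the Pietsch-type bound adapted to the abstract domination-space framework introduced earlier in the section.
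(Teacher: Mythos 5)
Your proof is correct and follows essentially the same route as the paper: the key step $(3)\Rightarrow(4)$ uses the identical construction (the decomposition seminorm $\|\cdot\|_{\Phi,L^p}$, the bound $\|T(x)\|\le K\|\langle x,\cdot\rangle\|_{\Phi,L^p}$ via the triangle inequality, and descent to the quotient), and $(2)\Rightarrow(3)$ invokes the same domination result. The only difference is cosmetic --- you close the cycle as $(1)\Rightarrow(2)\Rightarrow(3)\Rightarrow(1)$ plus $(3)\Leftrightarrow(4)$ instead of the paper's single four-step cycle, with the same computations appearing in each arrow.
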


\begin{proof} We assume without loss of generality that $K=1$.
The implication (1) to (2) is obvious. That (2) implies (3) is a consequence of \cite[Theorem 2.2]{BPR10} or Proposition \ref{1}, taking into account that the dual of $C(B_{X^*})$ is the space of regular Borel measures on $B_{X^*}$. For the implication (3) to (4), note that we can define a domination  space by means of  the seminorm
$$
\| \langle x, \cdot \rangle \|_{ \Phi,L^{p}} := \inf \big\{  \sum_{i=1}^n  \big( \int_{B_{X^*}}  |\Phi(\langle x_i, \cdot \rangle)|^p \,
 d \eta \big)^{1/p}: \, \sum_{i=1}^n x_i=x \big\}, \quad x \in X.
$$
Define $\widehat T([\langle x,\cdot\rangle]):=T(x)$. Let us see that $\widehat T$ is well-defined. If $[\langle x,\cdot\rangle]=[\langle y,\cdot\rangle]$ then $\|\langle x-y,\cdot\rangle \|_{\Phi,L^p}=0$. Given $\epsilon>0$ consider a finite decomposition $x-y=\sum_{i=1}^r z_i$ such that $\sum_{i=1}^r \| j\circ \Phi(\langle z_i,\cdot\rangle )\|_{L^p}<\epsilon.$ By (3),
$$
\|T(x-y)\|\leq \sum_{i=1}^r \|T(z_i)\|\leq \sum_{i=1}^r \Big( \int_{B_{X^*}}|\Phi(\langle z_i,\cdot \rangle )|^p \, d\eta \Big)^{1/p}<\epsilon,
$$
and so $T(x)=T(y)$.

For each $x$ and each finite set $x_1,...,x_n \in X$ such that $\sum_{i=1}^n x_i=x$, we have
$$
\|T(x)\| \le \sum_{i=1}^n \|T(x_i)\| \le \sum_{i=1}^n  \big( \int_{B_{X^*}}  |\Phi(\langle x_i, \cdot \rangle)|^p \, d \eta \big)^{1/p},
$$
and then $\|T(x)\| \le \| \langle x, \cdot \rangle \|_{ \Phi,L^{p}}.$     Hence,
\begin{eqnarray*}
\|\widehat T([ \langle x,\cdot \rangle])\| &=&\inf\{ \| T(y)\|: [ \langle x,\cdot \rangle]=[ \langle y,\cdot \rangle]\}\\
&\leq& \inf \{ \| \langle y,\cdot \rangle\|_{\Phi,L^p}:[ \langle x,\cdot \rangle]=[ \langle y,\cdot \rangle]\}\\
&=&\|[ \langle x,\cdot \rangle]\|_{(\widehat{X})_{\Phi}^{L^{p}}}.\\
\end{eqnarray*}

 By the comments at the beginning of this subsection, if $\Phi$ is bounded then the map $i_{(\widehat X)_\Phi^{L^p}}: \widehat X\to (\widehat X)_\Phi^{L^p}$ is continuous whenever $\widehat X$ is endowed with the supremum norm induced by $C(B_{X^*})$. This gives the factorization in (4).

The proof of the implication (4) to (1) follows from the direct calculation:
\begin{eqnarray*}
\Big(\sum_{i=1}^n \| i_{ (\widehat{X})_{\Phi}^{L^{p}}}(\langle x_i,\cdot\rangle )\|_{ (\widehat{X})_{\Phi}^{L^{p}}}^p\Big)^{1/p} &\leq & \Big(\sum_{i=1}^n \| \langle x_i,\cdot\rangle\|_{ \Phi,L^{p}}^p\Big)^{1/p} \\
&\leq &  \Big(\sum_{i=1}^n \int_{B_{X^*}}|\Phi( \langle x_i,\cdot\rangle)|^p\, d\eta \Big)^{1/p} \\
&\leq & \left\Vert \left( \sum_{i=1}^{n}\left\vert \Phi (\left\langle x_{i},\cdot
\right\rangle )\right\vert ^{p}\right) ^{\frac{1}{p}}\right\Vert
_{C(B_{X^{\ast }})},
\end{eqnarray*}
which shows that $i_{ (\widehat{X})_{\Phi}^{L^{p}}}$ is $\Phi$-abstract $p$-summing.

%Finally, (4) clearly gives (5) for $Z=L^p$, and obviously (5) implies (1).
\end{proof}

Let us finish the section with two particular cases of Theorem \ref{pcoro}.

\begin{itemize}

\item[(1)]
In the case of $p$-summing operators,  we know that the bounded homogeneous map $\Phi$ is the embedding map $\iota: \widehat X \to C(B_{X^*})$. In this case, the domination space is just a closed
subspace of $L_{p}(\mu)$.

\item[(2)]
The domination space for $(p,\sigma )$-absolutely continuous operators,
is the interpolation space $L_{p,\sigma }(\mu )$ (see for example \cite{DAh14}). Recall that  the bounded homogeneous map $\Phi$ is given by $\Phi(\langle x,\cdot\rangle):=\|x\|^\sigma |\langle x,\cdot\rangle|^{1-\sigma}$.

\end{itemize}

\section{Domination spaces for the multilinear case}

\subsection{Strongly $\Phi$-abstract $p$-summing multilinear operators}
Let $X_1, \cdots , X_m$ be Banach spaces.
Consider the compact
space $B_{\mathcal{L}(X_{1},...,X_{m})}$, endowed with the weak* topology
taking into account that $(X_{1}\otimes _{\pi }\cdots \otimes _{\pi
}X_{m})^{\ast }=\mathcal{L}(X_{1},...,X_{m})$. We will denote by $%
i_{m}:X_{1}\times ...\times X_{m}\rightarrow C(B_{(X_{1}\otimes _{\pi
}\cdots \otimes _{\pi }X_{m})^{\ast }})$ the $m$-linear mapping
defined as%
\begin{equation*}
i_{m}(x^{1},...,x^{m})(\varphi ):=\left\langle x^{1}\otimes
...\otimes x^{m},  \varphi \right\rangle ,
\end{equation*}%
for every $x^{j}\in X_{j},$ $j=1,...,m$, and $\varphi \in B_{(X_{1}\otimes
_{\pi }\cdots \otimes _{\pi }X_{m})^{\ast }}.$ Let $V_{m}$ be the linear space
spanned by the set $i_{m}(X_{1}\times ...\times X_{m}).$ The elements of this space are
the (weak*) continuous functions
\begin{equation*}
\sum_{k=1}^{n}\lambda ^{k}\left\langle x^{1,k}\otimes ...\otimes
x^{m,k}, \cdot \right\rangle ,
\end{equation*}%
where $\lambda ^{k}\in \mathbb{R}$ and $x^{j,k}\in X_{j},$ $j=1,...,m,$ $%
k=1,...,n$.

Let $\Phi :V_{m}\rightarrow C(B_{(X_{1}\otimes _{\pi }\cdots \otimes _{\pi
}X_{m})^{\ast }})$ be a  homogeneous mapping and
let $1\leq p <\infty $.

\begin{definition}
A multilinear operator $T:X_{1}\times \cdots \times X_{m}\rightarrow Y$ is
{\it strongly $\Phi$-abstract $p$-summing} if there is a constant $C>0$ such that for
each $x_{i}^{j,k}\in X_{j}$, $(j=1,...,m)$, and scalars $\lambda _{i}^{k},$ $%
i=1,...,n_{1},$ $k=1,...,n_{2}$ and all $n_{1},n_{2}\in \mathbb{N},$ we have%
\begin{equation*}
\left( \sum_{i=1}^{n_{1}}\left\Vert \sum_{k=1}^{n_{2}}\lambda
_{i}^{k}T(x_{i}^{1,k},...,x_{i}^{m,k})\right\Vert ^{p}\right) ^{1/p}
\end{equation*}%
\begin{equation}
\leq C\left\Vert \left( \sum_{i=1}^{n_{1}}\left\vert \Phi \left(
\sum_{k=1}^{n_{2}}\lambda _{i}^{k}\left\langle x_{i}^{1,k}\otimes
...\otimes x_{i}^{m,k}, \cdot \right\rangle \right) \right\vert ^{p}\right) ^{\frac{%
1 }{p}}\right\Vert _{C(B_{(X_{1}\otimes _{\pi }\cdots \otimes _{\pi
}X_{m})^{\ast }})}.  \label{def2}
\end{equation}
\end{definition}

Note that the class of factorable strongly $p$-summing multilinear
operators introduced in \cite{PeRuSa} is an
example of this definition just defining $\Phi $ as the natural embedding map
$\iota: V_{m}\rightarrow C(B_{(X_{1}\otimes _{\pi }\cdots \otimes _{\pi
}X_{m})^{\ast }})$. The strongly $p$-summing multilinear operators of Dimant
(see \cite{Dim03}) are also related with our class, but in this case  only $n_{2}=1$ is allowed in the  inequalities considered in the definition.

The next result gives the characterization of
strongly $\Phi$-abstract $p$-summing $m$-linear mappings by an integral domination.

\begin{theorem}
\label{domin} Let $T:X_{1}\times ...\times X_{m}\rightarrow Y$ be a $m$%
-linear mapping among Banach spaces and let $\Phi $ be a  homogeneous
map $\Phi :V_{m}\rightarrow C(B_{(X_{1}\otimes _{\pi }\cdots \otimes _{\pi
}X_{m})^{\ast }})$. The mapping $T$ is strongly
$\Phi$-abstract $p$-summing\ if and only if there is a regular Borel probability
measure $\eta $ on $B_{(X_{1}\otimes _{\pi }\cdots \otimes _{\pi
}X_{m})^{\ast }}$ and a constant $C>0$ such that for each $x^{j,k}\in X,$ $%
\lambda ^{k}\in \mathbb{R}$ with $j=1,...,m$ and $k=1,...,n$ we have%
\begin{equation*}
\left\Vert \sum_{k=1}^{n}\lambda ^{k}T(x^{1,k},...,x^{m,k})\right\Vert
\end{equation*}%
\begin{equation*}
\leq C\left( \int_{B_{(X_{1}\otimes _{\pi }\cdots \otimes _{\pi
}X_{m})^{\ast }}}\left\vert \Phi \left( \sum_{k=1}^{n}\lambda
^{k}\left\langle x^{1,k}\otimes ...\otimes x^{m,k}, \cdot \right\rangle
\right) \right\vert ^{p}\,d\eta \right) ^{\frac{1}{p}}.
\end{equation*}
\end{theorem}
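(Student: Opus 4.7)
My plan is to deduce this Pietsch-type domination by reducing to the abstract $R$-$S$-summing framework of \cite{BPR10}, leveraging that the strongly $\Phi$-abstract $p$-summing condition is nothing but the corresponding summability inequality for elements of $V_m \subset C(K)$, where $K := B_{(X_{1}\otimes _{\pi }\cdots \otimes _{\pi }X_{m})^{\ast }}$. Writing $u_i := \sum_k \lambda_i^k\, x_i^{1,k} \otimes \cdots \otimes x_i^{m,k}$ and denoting by $T_L$ the linearization of $T$, the defining inequality \eqref{def2} takes the compact form
\begin{equation*}
\Big(\sum_i \|T_L(u_i)\|^p\Big)^{1/p} \leq C \,\Big\|\Big(\sum_i |\Phi(u_i)|^p\Big)^{1/p}\Big\|_{C(K)}.
\end{equation*}

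The sufficient direction (integral domination implies summability) is routine: I raise the pointwise inequality to the $p$-th power, sum over $i$, interchange sum and integral, and dominate the integral of a positive function against the probability measure $\eta$ by its supremum over $K$; taking $p$-th roots delivers \eqref{def2}.

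For the necessary direction I intend to invoke \cite[Theorem~2.2]{BPR10}. I take $E := V_{m}$, $G := \mathbb{R}$, let $\mathcal{H}$ be the class of continuous $m$-linear mappings $X_{1}\times \cdots \times X_{m}\to Y$, and set
\begin{equation*}
R(\varphi,u,b) := |\Phi(u)(\varphi)|\,|b|, \qquad S(T,u,b) := \|T_L(u)\|\,|b|.
\end{equation*}
With all $b_j = 1$, the $R$-$S$-abstract $p$-summing inequality for $T$ coincides exactly with the strongly $\Phi$-abstract $p$-summing condition, so \cite[Theorem~2.2]{BPR10} supplies a regular Borel probability measure $\eta$ on $K$ with $S(T,u,b) \leq C\bigl(\int_{K}R(\varphi,u,b)^{p}\,d\eta(\varphi)\bigr)^{1/p}$. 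Specializing to $b=1$, expanding $u = \sum_k \lambda^k x^{1,k} \otimes \cdots \otimes x^{m,k}$, and using $T_L(u) = \sum_k \lambda^k T(x^{1,k},\ldots,x^{m,k})$ yields the desired integral domination. As an alternative I could mimic the proof of Proposition \ref{1}: a Ky Fan argument applied to the family of convex, weak*-continuous functionals on the positive part of the unit ball of regular Borel measures on $K$, obtained by subtracting the $p$-th power of the right-hand side from the left-hand side of \eqref{def2}, produces $\eta$ directly.

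The only subtlety I anticipate is a well-definedness check: the data $R(\varphi,u,b)$ and $S(T,u,b)$ must depend only on the element $u \in V_m$ and not on any particular tensor representation. This is automatic once one identifies $V_m$ with the linear subspace of $C(K)$ spanned by $i_m(X_1\times \cdots\times X_m)$, on which $\Phi$ is by hypothesis defined as a function, and observes that any two decompositions $\sum_k \lambda^k x^{1,k}\otimes\cdots\otimes x^{m,k}$ representing the same element of the algebraic tensor product give rise to the same value of $T_L$; thus neither $\Phi(u)$ nor $T_L(u)$ is ambiguous. Once this identification is in place the argument is essentially formal.
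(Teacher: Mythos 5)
Your proof is correct and takes essentially the same route as the paper: both directions reduce to the unified Pietsch domination theorem of \cite{BPR10}/\cite{PeSa} with $R(\varphi,\cdot,b)=|\Phi(\cdot)(\varphi)|\,|b|$ and $S(T,\cdot,b)=\|T_L(\cdot)\|\,|b|$. The paper merely indexes the abstract set $E$ by eventually null sequences of weighted tuples $(\lambda^k,x^{1,k},\ldots,x^{m,k})$ where you use the corresponding element $u\in V_m$ directly, a cosmetic difference once the representation-independence of $\Phi(u)$ and $T_L(u)$ that you check is in place.
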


\begin{proof} We write $Z^{(\mathbb N)}$ for all sequences in a Banach space $Z$ that are eventually null.
The $m$-linear mapping $T$ is $R$-$S$-abstract $p$-summing  for
$$
R:B_{(X_{1}\otimes _{\pi
}\cdots \otimes _{\pi }X_{m})^{\ast }}\times (\mathbb R\times X_1,\ldots, X_m)^{(\mathbb N)} \times \mathbb R\to [0,\infty)
$$
and
$$
S:{\mathcal L}(X_{1},\ldots,X_{m};Y)\times (\mathbb R\times X_1,\ldots, X_m)^{(\mathbb N)} \times \mathbb R\to [0,\infty),
$$
given by
\begin{equation*}
R\Big(\phi,\big( ( \lambda^1, x^{1,1},\ldots,x^{m,1}),\ldots, (\lambda^n,x^{1,n},\ldots,x^{m,n})\big), b\Big):=\Big| \Phi (\sum_{k=1}^n \lambda^k \langle x^{1,k} \otimes \cdots \otimes x^{m,n},\cdot\rangle )(\phi)\Big|
\end{equation*}%
and%
\begin{equation*}
S\Big(f,\big( ( \lambda^1, x^{1,1},\ldots,x^{m,1}),\ldots, (\lambda^n,x^{1,n},\ldots,x^{m,n})\big), b\Big):=\Big\| \sum_{k=1}^n \lambda^k f( x^{1,k} , \ldots , x^{m,n} )\Big\| .
\end{equation*}%
Then  \cite{BPR10} or \cite{ PeSa} gives the result.

\end{proof}

Note that Theorem \ref{domin} can be proved using \cite[Theorem 2.2]{BPR10} (see also the proof of Theorem \ref{dom2} below.)

Now we give the main result of this section, that generalizes Theorem 3.3 in
\cite{PeRuSa}.

\begin{theorem} \label{facpara}
\label{facto} Let $1 \le p < \infty$. Let $T:X_{1}\times ...\times X_{m}\rightarrow Y$ be an $m$%
-linear mapping between Banach spaces and let $\Phi $ a bounded homogeneous map  $\Phi :V_{m}\rightarrow
C(B_{(X_{1}\otimes _{\pi }\cdots \otimes _{\pi }X_{m})^{\ast }})$.
Then the mapping $T$ is strongly $\Phi$-abstract $p$-summing\ if and only if there
is a domination space $(\widehat{V_m})_{\Phi }^{L^p}$ defined by $\Phi $ and a $L^p$-space, and a continuous linear operator $\widehat T: (\widehat{V_m})_{\Phi }^{L^p} \to Y$ such that  $T=\widehat{T}\circ
i_{(\widehat{V_m})_{\Phi }^{L^p}}\circ i_m$, i.e.
\begin{equation*}
\xymatrix{ X_{1}\times ...\times X_{m} \ar[rr]^{T} \ar@{>}[d]_{i_{m}} & & Y .\\
V_{m} \ar@{>}[rr]^{i_{(\widehat{V_m})_{\Phi }^{L^p}}} & & (\widehat{V_m})_{\Phi }^{L^p} \,  \ar@{>}[u]_{\widehat T}}
\end{equation*}
\end{theorem}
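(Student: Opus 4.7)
The plan is to mimic the proof of Theorem \ref{pcoro}, working now with the linearization $T_L:X_1\widehat{\otimes}_\pi\cdots\widehat{\otimes}_\pi X_m\to Y$ and with the canonical identification of every $v=\sum_k\lambda^k\langle x^{1,k}\otimes\cdots\otimes x^{m,k},\cdot\rangle\in V_m$ with its corresponding tensor $\tilde v=\sum_k\lambda^k x^{1,k}\otimes\cdots\otimes x^{m,k}$ in the algebraic tensor product, which is well-defined and injective because $(X_1\otimes_\pi\cdots\otimes_\pi X_m)^*$ separates points.

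For the ``only if'' direction, I would first invoke Theorem \ref{domin} to produce a regular Borel probability measure $\eta$ on $B_{(X_1\otimes_\pi\cdots\otimes_\pi X_m)^*}$ and a constant $C>0$ for which the integral domination $\|T_L(\tilde v)\|\leq C(\int|\Phi(v)|^p\,d\eta)^{1/p}$ holds for every $v\in V_m$. I would then equip $V_m$ with the seminorm
$$
\|v\|_{\Phi,L^p}:=\inf\Big\{\sum_{i=1}^{r}\Big(\int|\Phi(v_i)|^p\,d\eta\Big)^{1/p}\;:\;v=\sum_{i=1}^{r}v_i,\;v_i\in V_m\Big\},
$$
set $N:=\{v\in V_m:\|v\|_{\Phi,L^p}=0\}$, and declare $(\widehat{V_m})_\Phi^{L^p}$ to be the completion of the quotient $V_m/N$. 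Define $\widehat T([v]):=T_L(\tilde v)$. To verify that $\widehat T$ is well-defined, I would take $v\in N$, fix $\epsilon>0$, pick a decomposition $v=\sum_i u_i$ with $\sum_i(\int|\Phi(u_i)|^p\,d\eta)^{1/p}<\epsilon$, and combine the triangle inequality with the integral domination to get $\|T_L(\tilde v)\|\leq\sum_i\|T_L(\tilde u_i)\|\leq C\epsilon$; letting $\epsilon\to 0$ forces $T_L(\tilde v)=0$. The same chain of inequalities yields $\|\widehat T([v])\|\leq C\|[v]\|_{(\widehat{V_m})_\Phi^{L^p}}$, so $\widehat T$ extends by density to a continuous linear map on the completion. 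Commutativity of the diagram follows from the identity $T_L\circ\sigma_m=T$ together with the fact that $i_{(\widehat{V_m})_\Phi^{L^p}}\circ i_m(x^1,\ldots,x^m)=[\langle x^1\otimes\cdots\otimes x^m,\cdot\rangle]$.

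For the ``if'' direction I would argue by direct computation in the style of the last display in the proof of Theorem \ref{pcoro}. Writing $v_i:=\sum_{k=1}^{n_2}\lambda_i^k\langle x_i^{1,k}\otimes\cdots\otimes x_i^{m,k},\cdot\rangle\in V_m$, the factorization together with the trivial one-term decomposition $v_i=v_i$ and the fact that $\eta$ has total mass $1$ give
\begin{align*}
\Big(\sum_{i=1}^{n_1}\Big\|\sum_{k=1}^{n_2}\lambda_i^k T(x_i^{1,k},\ldots,x_i^{m,k})\Big\|^p\Big)^{1/p}
&\leq\|\widehat T\|\,\Big(\sum_{i=1}^{n_1}\|i_{(\widehat{V_m})_\Phi^{L^p}}(v_i)\|^p\Big)^{1/p}\\
&\leq\|\widehat T\|\,\Big(\sum_{i=1}^{n_1}\int|\Phi(v_i)|^p\,d\eta\Big)^{1/p}\\
&\leq\|\widehat T\|\,\Big\|\Big(\sum_{i=1}^{n_1}|\Phi(v_i)|^p\Big)^{1/p}\Big\|_{C(B_{(X_1\otimes_\pi\cdots\otimes_\pi X_m)^*})},
\end{align*}
which is precisely the defining inequality (\ref{def2}).

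I expect the main obstacle to be bookkeeping rather than conceptual: keeping straight the identification $v\leftrightarrow\tilde v$ so that $\widehat T$, which is a priori defined on equivalence classes of continuous functions, delivers the value $T_L(\tilde v)$ for the corresponding tensor. As in the linear case, the boundedness of $\Phi$ is what guarantees that $i_{(\widehat{V_m})_\Phi^{L^p}}$ is continuous with respect to the supremum norm inherited from $C(B_{(X_1\otimes_\pi\cdots\otimes_\pi X_m)^*})$, which is needed for the diagram to make sense as a genuine factorization scheme.
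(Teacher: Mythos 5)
Your proposal is correct and follows essentially the same route as the paper: invoke Theorem \ref{domin} to obtain the Pietsch measure $\eta$, take $Z=L^p(\eta)$ to build the domination space $(\widehat{V_m})_{\Phi}^{L^p}$ with the infimum seminorm over decompositions, and read off the converse from the fact that $\eta$ is a probability measure. In fact you supply details the paper leaves implicit (the well-definedness and continuity of $\widehat T$ via the identification $v\leftrightarrow\tilde v$ and the linearization $T_L$, and the explicit computation for the ``if'' direction), all of which check out against the pattern of Theorem \ref{pcoro}.
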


\begin{proof}
By Theorem \ref{domin}, if $T$ is strongly $\Phi$-abstract $p$-summing there is
a regular probability measure $\eta $ on $B_{(X_{1}\otimes _{\pi }\cdots
\otimes _{\pi }X_{m})^{\ast }}$ and a constant $C>0$ such that for every $x^{j,k}\in X,$ $%
\lambda ^{k}\in \mathbb{R}$ with $j=1,...,m$ and $k=1,...,n$%
\begin{equation*}
\left\Vert \sum_{k=1}^{n}\lambda ^{k}T(x^{1,k},...,x^{m,k})\right\Vert
\end{equation*}%
\begin{equation*}
\leq C \left( \int_{B_{(X_{1}\otimes _{\pi }\cdots \otimes _{\pi }X_{m})^{\ast
}}}\left\vert \Phi \left( \sum_{k=1}^{n}\lambda ^{k}\left\langle x^{1,k}\otimes ...\otimes x^{m,k}, \cdot \right\rangle \right) \right\vert
^{p}\,d\eta \right) ^{\frac{1}{p}}.
\end{equation*}%
Therefore, in this case we consider the Banach function space $Z=L^p(\eta)$ and its related  domination space $(\widehat{V_m})_{\Phi }^{L^p}$, whose norm is given by

\begin{equation*}
\left\Vert \Big[\sum_{k=1}^{n}\lambda ^{k}\left\langle x^{1,k}\otimes
...\otimes x^{m,k}, \cdot \right\rangle \Big] \right\Vert _{(\widehat{V_m})_{\Phi }^{L^p} }
\end{equation*}%
\begin{equation*}
:=\inf \sum_{i=1}^{r}\left( \int_{B_{(X_{1}\otimes _{\pi }\cdots \otimes
_{\pi }X_{m})^{\ast }}}\left\vert \Phi \left( \sum_{k=1}^{n}\lambda
_{i}^{k}\left\langle x_{i}^{1,k}\otimes ...\otimes
x_{i}^{m,k}, \cdot \right\rangle \right) \right\vert ^{p}\,d\eta \right) ^{\frac{1}{p%
}},
\end{equation*}%
where the infimum is taken over all representations of $\sum_{k=1}^{n}%
\lambda ^{k}x^{1,k}\otimes ...\otimes x^{m,k}$ of the form $%
\sum_{i=1}^{r}\left( \sum_{k=1}^{n}\lambda _{i}^{k}x_{i}^{1,k}\otimes
...\otimes x_{i}^{m,k}\right) $.
\end{proof}

The following corollary gives the connection between a strongly $\Phi$-abstract $p$-summing multilinear operators and its linearization.

\begin{corollary} Let $\Phi: V_m \to C(B_{(X_1 \otimes_\pi \cdots \otimes_\pi X_m)^*})$ be homogeneous  and let $1 \le p < \infty$.
A multilinear mapping $T:X_{1}\times ...\times X_{m}\rightarrow Y$ is strongly $\Phi$-abstract $p$-summing if and only if
its linearization $T_{L}$ is a $\Phi$-abstract $p$-summing linear operator.
\end{corollary}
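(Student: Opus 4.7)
My plan is to reduce the equivalence to a direct dictionary between the two summability inequalities. The key structural observation is that, via the canonical isometric embedding $i: X_1\widehat{\otimes}_\pi\cdots\widehat{\otimes}_\pi X_m \hookrightarrow C(B_{(X_1\widehat{\otimes}_\pi\cdots\widehat{\otimes}_\pi X_m)^*})$, the subspace $V_m$ corresponds precisely to the image of the finite rank tensors, since a finite rank tensor $u=\sum_k \lambda^k x^{1,k}\otimes\cdots\otimes x^{m,k}$ is sent to $\langle u,\cdot\rangle=\sum_k \lambda^k \langle x^{1,k}\otimes\cdots\otimes x^{m,k},\cdot\rangle \in V_m$. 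Moreover, by the defining property of the linearization, $T_L(u)=\sum_k \lambda^k T(x^{1,k},\ldots,x^{m,k})$. Since $(X_1\widehat{\otimes}_\pi\cdots\widehat{\otimes}_\pi X_m)^*=(X_1\otimes_\pi\cdots\otimes_\pi X_m)^*$ isometrically, the compact balls appearing in the two summability conditions coincide.

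With this dictionary, setting $u_i=\sum_k \lambda_i^k x_i^{1,k}\otimes\cdots\otimes x_i^{m,k}$ transforms the strongly $\Phi$-abstract $p$-summing inequality for $T$ evaluated at $(\lambda_i^k, x_i^{j,k})$ into the $\Phi$-abstract $p$-summing inequality for $T_L$ evaluated at the tensors $u_i$, and vice versa. The implication $(\Leftarrow)$ is therefore immediate: assuming $T_L$ satisfies the linear inequality for every tensor, I specialize to the finite rank tensors $u_i$ above to recover, term by term, the multilinear inequality defining strongly $\Phi$-abstract $p$-summing for $T$.

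For $(\Rightarrow)$, the same substitution produces the linear inequality for $T_L$ on every finite rank tensor, i.e.\ on the dense subspace $V_m \subset \widehat{X_1\widehat{\otimes}_\pi\cdots\widehat{\otimes}_\pi X_m}$. The remaining task is to extend this to all of $X_1\widehat{\otimes}_\pi\cdots\widehat{\otimes}_\pi X_m$. I expect this extension step to be the only nontrivial point, and the cleanest route is through the integral domination of Theorem \ref{domin}: the probability measure $\eta$ produced there gives $\|T_L(u)\|\leq C\bigl(\int_{B}|\Phi(\langle u,\cdot\rangle)|^p \, d\eta\bigr)^{1/p}$ for every finite rank tensor $u$, and after extending $\Phi$ continuously from $V_m$ to the closure (routine once $\Phi$ is bounded, as is standard throughout the paper), continuity of $T_L$ passes the estimate to the whole completion. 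A further application of Theorem \ref{pcoro} then identifies $T_L$ as a $\Phi$-abstract $p$-summing linear operator, completing the proof.
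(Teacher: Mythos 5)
Your argument takes a genuinely different route from the paper's. You work directly with the two summability inequalities and observe that the substitution $u_i=\sum_k\lambda_i^k x_i^{1,k}\otimes\cdots\otimes x_i^{m,k}$, together with $T_L(u_i)=\sum_k\lambda_i^k T(x_i^{1,k},\ldots,x_i^{m,k})$ and the identification of the two dual unit balls, turns one inequality into the other verbatim; this makes the equivalence essentially a change of notation on $V_m$. The paper instead argues at the level of factorization diagrams: it splices the canonical factorization $T=T_L\circ\sigma_m$ into the diagram of Theorem \ref{facpara} via the isometric embedding $I_m:X_1\otimes_\pi\cdots\otimes_\pi X_m\to V_m$, checks that $\widehat T\circ i_{(\widehat{V_m})_\Phi^{L^p}}\circ I_m=T_L$, and then invokes Theorem \ref{pcoro} together with the identification $V_m=\widehat{X_1\otimes_\pi\cdots\otimes_\pi X_m}$. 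Your approach is more elementary (it does not need the domination space machinery at all, only the definitions), while the paper's makes the structural point that the two factorization diagrams are literally the same diagram; both are legitimate.

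One caveat concerns your extension step in the forward direction. You claim that $\Phi$ extends continuously from $V_m$ to its closure ``routine once $\Phi$ is bounded.'' This is not justified: a bounded homogeneous map need not be uniformly continuous, nor even continuous, so boundedness alone does not give a continuous extension to the completion; moreover the corollary as stated only assumes $\Phi$ homogeneous. The way out --- and the reading the paper implicitly adopts by writing $V_m=\widehat{X_1\otimes_\pi\cdots\otimes_\pi X_m}$ --- is that the $\Phi$-abstract $p$-summability of $T_L$ is only meaningful, and is only asserted, for elements of the algebraic tensor product, where $\Phi(\langle u,\cdot\rangle)$ is defined. Under that convention no extension is needed and your substitution argument is already complete; you should either adopt that convention explicitly or add a hypothesis guaranteeing that $\Phi$ extends (e.g.\ uniform continuity on bounded sets), rather than deriving it from boundedness.
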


\begin{proof}
Note that, by the canonical factorization of the multilinear maps through
the projective tensor product, we have that the factorization in Theorem \ref{facpara} can be written as%
\begin{equation*}
\xymatrix{ &X_{1}\times ...\times X_{m}\ar[dl]^{\sigma_m} \ar[rr]^{T} \ar@{>}[d]_{i_m} & & Y\\
X_{1} {\otimes }_{\pi }\cdots  {\otimes }_{\pi }X_{m}\ar[r]^{I_m}& V_m \ar@{>}[rr]^{i_{(\widehat{V_m})_{\Phi }^{L^p}}} & &(\widehat{V_m})_{\Phi }^{L^p} \
\ar@{>}[u]_{\widehat T}}
\end{equation*}%
where $I_{m}$ is the the isometric embedding $X_{1} {\otimes }_{\pi
}\cdots  {\otimes }_{\pi }X_{m}\longrightarrow V_{m}$ given by
\begin{equation*}
I_{m}(x^{1}\otimes \cdots \otimes x^{m})=\left\langle x^{1}\otimes
\cdots \otimes x^{m}, \cdot \right\rangle
\end{equation*}%
It is clear that $\widehat{T}\circ i_{(\widehat{V_m})_{\Phi }^{L^p}}\circ I_{m}=T_{L}$ and so by Theorem \ref{pcoro} $T
$ is strongly $\Phi$-abstract $p$-summing if and only if $T_{L}$ is $\Phi$-abstract $p$-summing. Note that, using the notation introduced in Section 2, we have that  $V_m= \widehat{(X_{1} {\otimes }_{\pi }\cdots {\otimes }_{\pi }X_{m}})$.
\end{proof}

\subsection{$(\Phi _{1},...,\Phi _{m})$-abstract $(p_{1},...,p_{m})$-summing multilinear operators}$\,$
\newline

Throughout this section, $1\leq p,p_{1},...,p_{m}<\infty $ are such that $\frac{1}{p}=\frac{1}{p_{1}}%
+...+\frac{1}{p_{m}}$ and  $X_{1},...,X_{m},Y$ are Banach spaces. Let also $%
\Phi _{j}:\widehat{X}_{j}\longrightarrow C(B_{X_{j}^{\ast }}),$ $%
j=1,...,m,$ be bounded  homogeneous maps.

\begin{definition}
An $m$-linear operator $T\in \mathcal{L}(X_{1},...,X_{m};Y)$ is said to be $(\Phi _{1},...,\Phi _{m})$-abstract $(p_{1},...,p_{m})$-summing if there is a constant $%
C>0$ so that%
\begin{equation*}
\left( \overset{n}{\underset{i=1}{\sum }}\left\Vert
T(x_{i}^{1},...,x_{i}^{m})\right\Vert ^{p}\right) ^{\frac{1}{p}}\leq C%
\overset{m}{\underset{j=1}{\prod }}\left\Vert \left( \overset{n}{\underset{%
i=1}{\sum }}  \left\vert  \Phi _{j} \left( \left\langle x_{i}^{j},.\right\rangle
 \right) \right\vert^{p_{j}}\right) ^{\frac{1}{p_{j}}}\right\Vert
_{C(B_{X_{j}^{\ast }})}.
\end{equation*}
\end{definition}

Using the full general Pietsch Domination Theorem presented by
Pellegrino et all in \cite{PeSaSe} we obtain the Domination Theorem for our
class.

\begin{theorem} \label{dom2}
An  operator $T\in \mathcal{L}(X_{1},...,X_{m};Y)$ is
$(\Phi _{1},...,\Phi _{m})$-abstract $(p_{1},...,p_{m})$-summing if and only if there is a
constant $C>0$ and regular Borel probability measures $\mu _{j}\in
C(B_{X_{j}^{\ast }})^{\ast }$ such that for all $x^{j}\in X_{j}(j=1,...,m),$
\begin{equation}
\left\Vert T(x^{1},...,x^{m})\right\Vert \leq C\overset{m}{\underset{j=1}{%
\prod }}\left( \int_{B_{X_{j}^{\ast }}} \left\vert \Phi _{j}\left(
\left\langle x^{j},.\right\rangle \right)  \right\vert^{p_{j}}d\mu
_{j}\right) ^{\frac{1}{p_{j}}}.  \label{dom9}
\end{equation}
\end{theorem}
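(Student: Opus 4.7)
The plan is to apply the full general Pietsch Domination Theorem of Pellegrino, S\'anchez P\'erez and Seoane-Sep\'ulveda \cite{PeSaSe}, which is specifically designed to handle summability inequalities involving several terms on the right hand side (here, $m$ factors each coming from a different compact space $B_{X_j^*}$). The idea is to rewrite the defining inequality for $(\Phi_1,\ldots,\Phi_m)$-abstract $(p_1,\ldots,p_m)$-summing operators in the abstract $R_1$-$\ldots$-$R_m$-$S$-summing format of \cite{PeSaSe}.

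More concretely, I would first fix the sets and spaces of the abstract framework: take $E:=X_1\times\cdots\times X_m$, $G:=\mathbb{R}$, $\mathcal{H}:=\mathcal{L}(X_1,\ldots,X_m;Y)$, and for each $j=1,\ldots,m$ set $K_j:=B_{X_j^*}$ with its weak* topology (so each $K_j$ is a compact Hausdorff space). Then I would define
\[
R_j : K_j \times E \times G \longrightarrow [0,\infty),\qquad R_j(\varphi_j,(x^1,\ldots,x^m),b):=\bigl|\Phi_j(\langle x^j,\cdot\rangle)(\varphi_j)\bigr|\,|b|^{1/p_j},
\]
and
\[
S:\mathcal{H}\times E\times G\longrightarrow [0,\infty),\qquad S(T,(x^1,\ldots,x^m),b):=\|T(x^1,\ldots,x^m)\|\,|b|^{1/p_1+\cdots+1/p_m}.
\]
With these choices and $b=1$, the summability inequality defining $(\Phi_1,\ldots,\Phi_m)$-abstract $(p_1,\ldots,p_m)$-summing operators is exactly the inequality
\[
\Bigl(\sum_{i=1}^{n} S(T,(x_i^1,\ldots,x_i^m),1)^{p}\Bigr)^{1/p} \le C\,\prod_{j=1}^m \sup_{\varphi_j\in K_j}\Bigl(\sum_{i=1}^{n}R_j(\varphi_j,(x_i^1,\ldots,x_i^m),1)^{p_j}\Bigr)^{1/p_j}
\]
that appears in the hypothesis of the general Pietsch domination theorem of \cite{PeSaSe}.

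Next I would verify the structural hypotheses required on $R_j$ and $S$ in \cite{PeSaSe}: positive homogeneity in the scalar variable $b$ (immediate from the definition and from the positive homogeneity of $\Phi_j$), the continuity of $\varphi_j\mapsto R_j(\varphi_j,(x^1,\ldots,x^m),b)$ on $K_j$ (which holds because $\Phi_j(\langle x^j,\cdot\rangle)\in C(B_{X_j^*})$ and composition with $|\cdot|$ preserves continuity), and the appropriate monotonicity/subadditivity conditions. Once these are checked, \cite[Theorem~4.6 or equivalent]{PeSaSe} yields regular Borel probability measures $\mu_j$ on $B_{X_j^*}$ and a constant $C>0$ such that
\[
S(T,(x^1,\ldots,x^m),1) \le C \prod_{j=1}^m\Bigl(\int_{B_{X_j^*}} R_j(\varphi_j,(x^1,\ldots,x^m),1)^{p_j}\,d\mu_j(\varphi_j)\Bigr)^{1/p_j},
\]
which is exactly the desired inequality \eqref{dom9}.

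For the converse implication, I would proceed by a direct computation: starting from the integral domination \eqref{dom9} applied to each tuple $(x_i^1,\ldots,x_i^m)$, raise to the $p$-th power, sum over $i$, and apply the generalized H\"older inequality with exponents $p_1/p,\ldots,p_m/p$ (using $1/p=\sum 1/p_j$) to separate the $m$ integrals. Bounding each integral by the supremum over $B_{X_j^*}$ (since $\mu_j$ are probability measures) produces the summability inequality in the definition. The main obstacle I expect is the bookkeeping needed to match precisely the hypotheses of the abstract theorem of \cite{PeSaSe} with our choice of $R_j$ and $S$; once the correct translation is written down, the rest is essentially a verification.
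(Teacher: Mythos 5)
Your proposal follows essentially the same route as the paper: the paper also casts $T$ as an $R_1$-$\ldots$-$R_m$-$S$-abstract $(p_1,\ldots,p_m)$-summing mapping with $R_j$ built from $|\Phi_j(\langle x^j,\cdot\rangle)|$ and $S$ from $\|T(x^1,\ldots,x^m)\|$, and then invokes Theorem 4.6 of \cite{PeSaSe}. Your extra care in checking the structural hypotheses and in writing out the converse via the generalized H\"older inequality is sound and only makes explicit what the paper leaves implicit.
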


\begin{proof}
The $m$-linear mapping $T$ is $R_{1},...,R_{m}$-$S$-abstract $(p_{1},...,p_{m})
$-summing (see \cite[Definition 4.4]{PeSaSe}), for
\begin{equation*}
R_{j}(\varphi ,b^{1},...,b^{r},x^{j})=  \left\vert \Phi _{j}\left(
\left\langle x^{j},\varphi \right\rangle  \right)\right\vert
\end{equation*}%
and%
\begin{equation*}
S(T,b^{1},...,b^{r},x^{1},...,x^{m})=\left\Vert T(x^{1},...,x^{m})\right\Vert
\end{equation*}%
for all $\varphi \in B_{X_{j}^{\ast }},b^{l}\in \mathbb{K},x^{j}\in X_{j}$
with $j=1,...,m.$ Then  Theorem 4.6 in \cite{PeSaSe} gives the result.
\end{proof}
Using the factorization for the linear case (Theorem \ref{pcoro}), we are now ready to construct
the domination space that provides  the factorization theorem for
our class.

\begin{theorem} \label{facfi}
The $m$-linear operator $T\in \mathcal{L}(X_{1},...,X_{m};Y)$ is $(\Phi _{1},...,\Phi _{m})$-abstract $(p_{1},...,p_{m})$-summing if and only if there are Banach spaces $%
G_{j}$ and  $\Phi _{j}$-abstract $p_{j}$-summing linear operators $u_{j}:X_{j}\longrightarrow G_{j}$, j=1,...m,  such that $T$ factors through $G_{1}\times ...\times G_{m}$
as $T=\widehat{T}\circ (u_{1},...,u_{m}),$ where $\widehat{T}\in \mathcal{L}%
(G_{1},...,G_{m};Y)$.
\end{theorem}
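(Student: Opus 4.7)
The plan is to prove both implications, where the sufficient direction is a direct Hölder computation and the necessary direction relies on assembling the factorization from the $\Phi_j$-summing linear factorizations of Theorem \ref{pcoro}, one slot at a time.

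For sufficiency $(\Leftarrow)$, assume $T=\widehat T\circ (u_1,\ldots,u_m)$ with each $u_j$ a $\Phi_j$-abstract $p_j$-summing linear operator. Given $x_1^j,\ldots,x_n^j\in X_j$, multilinearity and boundedness of $\widehat T$ give
\[
\|T(x_i^1,\ldots,x_i^m)\|\le \|\widehat T\|\prod_{j=1}^m\|u_j(x_i^j)\|_{G_j}.
\]
Raising to the $p$-th power, summing in $i$, and applying Hölder's inequality with the exponents $p_j/p$ (which satisfy $\sum_j p/p_j=1$ by hypothesis), I obtain
\[
\Bigl(\sum_{i=1}^n\|T(x_i^1,\ldots,x_i^m)\|^{p}\Bigr)^{1/p}\le \|\widehat T\|\prod_{j=1}^m\Bigl(\sum_{i=1}^n\|u_j(x_i^j)\|_{G_j}^{p_j}\Bigr)^{1/p_j}.
\]
Finally, applying the $\Phi_j$-abstract $p_j$-summing inequality for each $u_j$ on the right yields the $(\Phi_1,\ldots,\Phi_m)$-abstract $(p_1,\ldots,p_m)$-summing inequality for $T$.

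For necessity $(\Rightarrow)$, I would invoke Theorem \ref{dom2} to obtain regular Borel probability measures $\mu_j$ on $B_{X_j^*}$ and a constant $C>0$ with the integral domination \eqref{dom9}. For each $j$ set $G_j:=(\widehat{X_j})_{\Phi_j}^{L^{p_j}(\mu_j)}$, the domination space provided by Theorem \ref{pcoro}, and take $u_j:=i_{(\widehat{X_j})_{\Phi_j}^{L^{p_j}}}\circ i\colon X_j\to G_j$. The last computation in the proof of Theorem \ref{pcoro} shows $i_{(\widehat{X_j})_{\Phi_j}^{L^{p_j}}}$, hence $u_j$, is $\Phi_j$-abstract $p_j$-summing. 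I then define $\widehat T$ on the dense product $u_1(X_1)\times\cdots\times u_m(X_m)\subset G_1\times\cdots\times G_m$ by $\widehat T\bigl(u_1(x^1),\ldots,u_m(x^m)\bigr):=T(x^1,\ldots,x^m)$, with the plan of showing this is well defined, multilinear, and continuous, so that it extends uniquely to a continuous $m$-linear map on $G_1\times\cdots\times G_m$.

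The key estimate, which handles both well-definedness and continuity simultaneously, comes from combining the multilinearity of $T$ with the distributivity of products over sums. Given arbitrary finite decompositions $x^j=\sum_{i_j}x_{i_j}^j$ in $X_j$, multilinearity expands
\[
T(x^1,\ldots,x^m)=\sum_{i_1,\ldots,i_m}T(x_{i_1}^1,\ldots,x_{i_m}^m),
\]
and applying \eqref{dom9} to each summand followed by distributivity gives
\[
\|T(x^1,\ldots,x^m)\|\le C\prod_{j=1}^m\sum_{i_j}\Bigl(\int_{B_{X_j^*}}|\Phi_j(\langle x_{i_j}^j,\cdot\rangle)|^{p_j}\,d\mu_j\Bigr)^{1/p_j}.
\]
Taking the infimum over decompositions in each slot independently yields $\|T(x^1,\ldots,x^m)\|\le C\prod_j\|u_j(x^j)\|_{G_j}$. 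Well-definedness (if $u_j(x^j)=u_j(y^j)$ for all $j$, then $T(x^1,\ldots,x^m)=T(y^1,\ldots,y^m)$) follows from a telescoping argument: the difference is a sum of terms each of which has one slot of $G_j$-norm zero. The main obstacle in my view is this well-definedness/extension step, since the naïve pointwise definition of $\widehat T$ could in principle fail to descend through the quotients defining the $G_j$; the factor-by-factor decomposition argument above is exactly what is needed. Once continuity on the dense subproduct is established, the standard extension of a bounded multilinear map to the completion produces the desired $\widehat T\in\mathcal L(G_1,\ldots,G_m;Y)$ with $T=\widehat T\circ(u_1,\ldots,u_m)$.
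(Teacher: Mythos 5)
Your proof is correct and follows essentially the same route as the paper: H\"older's inequality with exponents $p_j/p$ for the sufficiency, and for the necessity the same construction of $u_j=i_{(\widehat{X_j})_{\Phi_j}^{L^{p_j}}}\circ i_j$ from Theorem \ref{dom2} and Theorem \ref{pcoro}, together with the definition of $\widehat{T}$ on the dense subproduct $u_1(X_1)\times\cdots\times u_m(X_m)$ and its continuous extension. The only difference is that you spell out the multilinear-expansion/distributivity estimate $\|T(x^1,\ldots,x^m)\|\le C\prod_j\|u_j(x^j)\|_{G_j}$ and the telescoping argument for well-definedness, details which the paper compresses into ``a simple computation'' and a reference to \cite{DAS12}.
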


\begin{proof}
A simple computation using  Theorem \ref{pcoro} and Theorem \ref{dom2} shows that,  if $T$
has such a factorization then $T$ is $(\Phi _{1},...,\Phi _{m})$-abstract $(p_{1},...,p_{m})$-summing.

Conversely, take $T$ in our class. For each $j=1,...m$, consider the
 map $u_{j}:X_{j} \to  (\widehat{X_j})_{\Phi_j }^{L^{p_j}}$ defined  by $u_j:= i_{ (\widehat{X_j})_{\Phi_j }^{L^{p_j}}} \circ i_j$, where   $i_j:X_j \to \widehat{X}_j$  is the identification map and $ i_{ (\widehat{X_j})_{\Phi_j }^{L^{p_j}}}:\widehat{X}_j \to   (\widehat{X_j})_{\Phi_j }^{L^{p_j}}$ is the quotient map defined as
was explained in Section 2. Notice that we have
\begin{equation*}
\begin{array}{lll}
\left\Vert u_{j}(x^{j})\right\Vert  & = & \inf \left\{ \overset{n}{\underset{%
k=1}{\sum }}\left( \int_{B_{X_{j}^{\ast }}}   \left\vert \Phi _{j}\left(
\left\langle x_{k}^{j},.\right\rangle  \right) \right\vert^{p_{j}}d\mu
_{j}\right) ^{\frac{1}{p_{j}}},x^{j}=\overset{n}{\underset{k=1}{\sum }}%
x_{k}^{j}\right\}  \\
& \leq  & \left( \int_{B_{X_{j}^{\ast }}}\left\vert \Phi _{j}\left(
\left\langle x^{j},.\right\rangle \right)  \right\vert^{p_{j}}d\mu
_{j}\right) ^{\frac{1}{p_{j}}}, %
\end{array}%
\end{equation*}%
and so $u_{j}$ is $\Phi _{j}$-abstract $p_{j}$-summing for all $j=1,...m.$ Let $\widehat{T}%
_{0}$ be the $m$-linear mapping defined on $u_{1}(X_{1})\times ...\times
u_{m}(X_{m})$ by
\begin{equation*}
\widehat{T}_{0}(u_{1}(x^{1})\times ...\times u_{m}(x^{m})):=T(x^{1},...,x^{m}),
\end{equation*}%
that is continuous due to the inequality  (\ref{dom9}) that provides Theorem \ref{dom2}.
(Following the idea of \cite[Theorem 3.6 ]{DAS12} we can easily prove that the mapping $%
\widehat{T}_{0}$ is well-defined and continuous). We finish the
proof by defining $\widehat{T}$ as the unique extension of $\widehat{T}_{0}$ to $%
\overline{u_{1}(X_{1})}\times ...\times \overline{u_{m}(X_{m})}=
 (\widehat{X_1})_{\Phi_1 }^{L^{p_1}}\times ...\times  (\widehat{X_m})_{\Phi_m}^{L^{p_m}}.$
\end{proof}

\begin{example}
The situation considered in the preceding theorem includes relevant well-known cases. The next classes of operators satisfy  factorization theorems  associated to
  summability properties that are particular cases of  Theorem \ref{facfi}.
\begin{itemize}

\item[(1)]
\textbf{$(p_{1},...,p_{m})$-dominated multilinear operators} (see \cite{14} or \cite{10}). Let $1\leq p,p_{1},...,p_{m}<\infty $ with $\frac{1}{p}=\frac{1}{p_{1}}+...+%
\frac{1}{p_{m}}$. A multilinear operator $T\in \mathcal{L}(X_{1},...,X_{m};Y)
$ is {\it $(p_{1},...,p_{m})$-dominated} if there is a constant $C>0$ such that
\begin{equation*}
\left( \overset{n}{\underset{i=1}{\sum }}\left\Vert
T(x_{i}^{1},...,x_{i}^{m}\right\Vert ^{p}\right) ^{\frac{1}{p}}\leq C\overset%
{m}{\underset{j=1}{\prod }}\underset{x_{j}^{\ast }\in B_{X_{j}^{\ast }}}{%
\sup }\left( \overset{n}{\underset{i=1}{\sum }}\left\vert \left\langle
x_{i}^{j},x_{j}^{\ast }\right\rangle \right\vert ^{p_{j}}\right) ^{\frac{1}{%
p_{j}}}
\end{equation*}
for all $(x_{i}^{j})_{1\leq i\leq n}\subset X_{j}$, $j=1,...,m$. In this case, the corresponding homogeneous maps $\Phi _{j}$ are the
inclusion maps $\widehat{X}_{j}\hookrightarrow C(B_{X_{j}^{\ast }})$ and the
corresponding seminorms are
\begin{equation*}
\left\Vert \left\langle x^{j},.\right\rangle \right\Vert _{ (\widehat{X_j})_{\Phi_j }^{L^{p_j}}}=\inf \left\{ \overset{n}{\underset{k=1}{\sum }}\left(
\int_{B_{X_{j}^{\ast }}}\left\vert \left\langle x_{k}^{j},.\right\rangle
\right\vert ^{p_{j}}d\eta _{j}\right) ^{\frac{1}{p_{j}}},x^{j}=\overset{n}{%
\underset{k=1}{\sum }}x_{k}^{j}\right\}.
\end{equation*}%
The domination  spaces $ (\widehat{X_j})_{\Phi_j }^{L^{p_j}}$ coincide
with a subspace of $L^{p_{j}}(\eta _{j})$,  $j=1,...,m$, that provide the factorization theorem for this class (see Theorem 3.2.4 in \cite{Gei84} and Theorem 14 in \cite{14}).

\item[(2)]
\textbf{Dominated $(p_{1},...,p_{m};\sigma )$-continuous multilinear
operators} (see \cite{DAS12} or \cite{DAh14}). Let $1\leq
p,p_{1},...,p_{m}<\infty $ and $0\leq \sigma <1$ with $\frac{1}{p}=\frac{1}{%
p_{1}}+...+\frac{1}{p_{m}}$. A multilinear operator $T\in \mathcal{L}%
(X_{1},...,X_{m};Y)$ is {\it dominated $(p_{1},...,p_{m};\sigma )$-continuous}  if
there is a constant $C>0$ such that
\begin{equation*}
\left( \overset{n}{\underset{i=1}{\sum }}\left\Vert
T(x_{i}^{1},...,x_{i}^{m})\right\Vert ^{\frac{p}{1-\sigma }}\right) ^{\frac{%
1-\sigma }{p}}\leq C\overset{m}{\underset{j=1}{\prod }}\underset{x_{j}^{\ast
}\in B_{X_{j}^{\ast }}}{\sup }\left( \overset{n}{\underset{i=1}{\sum }}%
\left( \left\vert \left\langle x_{i}^{j},x_{j}^{\ast }\right\rangle
\right\vert ^{1-\sigma }\left\Vert x_{i}^{j}\right\Vert ^{\sigma }\right) ^{%
\frac{p_{j}}{1-\sigma }}\right) ^{\frac{1-\sigma }{p_{j}}}
\end{equation*}%
for all $(x_{i}^{j})_{1\leq i\leq n}\subset X_{j},$ $j=1,...,m$. The
corresponding homogeneous maps $\Phi _{j}:\widehat{X}_{j}\longrightarrow
C(B_{X_{j}^{\ast }})$ are defined by%
\begin{equation*}
\Phi _{j}( \left\langle x^{j},.\right\rangle
)=\left\vert \left\langle x^{j},.\right\rangle \right\vert ^{1-\sigma
}\left\Vert x^{j}\right\Vert ^{\sigma },
\end{equation*}
the corresponding seminorms are given by%
\begin{equation*}
\left\Vert \left\langle x^{j},.\right\rangle \right\Vert _{ (\widehat{X_j})_{\Phi_j }^{L^{p_j}}}=\inf \left\{ \overset{n}{\underset{k=1}{\sum }}\left\Vert
x_{k}^{j}\right\Vert ^{\sigma }\left( \int_{B_{X_{j}^{\ast }}}\vert
\langle x_{k}^{j},.\rangle \vert ^{p_{j}}d\eta _{j}\right)
^{\frac{1-\sigma }{p_{j}}},x^{j}=\overset{n}{\underset{k=1}{\sum }}%
x_{k}^{j}\right\}
\end{equation*}%
and the domination spaces $ (\widehat{X_j})_{\Phi_j }^{L^{p_j}}$ coincide with the spaces $%
L_{p_{j},\sigma }(\eta _{j}),$ $j=1,...,m$, that were described in \cite[Section 2.1.3]
{DAh14}. Then  Theorem \ref{facfi} when applied with these elements gives the  factorization result that can be found in \cite[Theorem 2.1.20]{DAh14} (see also
 \cite{adsp14} and \cite{DAS12}).
\end{itemize}

\end{example}

\section{Applications: the particular case of factorable strongly summing multilinear operators}

Dimant introduced in  \cite{Dim03} the ideal  of strongly $p$%
-summing multilinear operators as follows.

\noindent A continuous\ $m$-linear operator $T:X_{1}\times...\times X_{m}\longrightarrow Y$
is {\it  strongly $p$-summing} if there exists a constant $C>0$ such that for every $%
x_{1}^{j},...,x_{n}^{j}\in X_{j},$  $j=1,...,m,$ we have
\begin{equation}
\left\Vert \left( T(x_{i}^{1},...,x_{i}^{m})\right) _{i=1}^{n}\right\Vert
_{p}\leq C\underset{\varphi \in B_{\mathcal{L}\left( X_{1},...,X_{m}\right) }}{%
\sup }\left( \overset{n}{\underset{i=1}{\sum }}\left\vert \varphi
(x_{i}^{1},...,x_{i}^{m})\right\vert ^{p}\right) ^{\frac{1}{p}}.
\label{Difdim03}
\end{equation}
In this section we will follow the notation of the original presentation of Dimant according to the above definition.
\noindent We will denote by $\mathcal{L}_{p}^{s}\left(
X_{1},...,X_{m};Y\right) $ the vector space of all strongly $p$-summing $m$%
-linear operators $T$ from $X_{1}\times ...\times X_{m}$ into $Y$, which is
a Banach space if we consider the norm $\left\Vert T\right\Vert _{\mathcal{L}
_{p}^{s}},$ the infimum of all $C$ satisfying (\ref{Difdim03}). Even if this class satisfies a domination theorem, it is well-known that it does not satisfy a factorization theorem. So, no factorization scheme is expected whenever we try to extend this class via an interpolation procedure. Matter  initiated this general interpolation technique in  \cite{Matt87} where he introduces the class of $(p,\sigma)$-absolutely continuous linear operators, that recovers the $p$-summing operators for $\sigma=0$. The natural interpolation class related to strongly $p$-summing $m$-linear maps can be constructed as follows.

Let $1\leq p<\infty $ and $0\leq \sigma <1$. For all $(x_{i}^{j})_{i=1}^{n}%
\subset X_{j},(1\leq j\leq m)$ we put
\begin{equation*}
\delta _{p\sigma }((x_{i}^{j})_{i=1}^{n})=\underset{\varphi \in B_{\mathcal{L}%
\left( X_{1},...,X_{m}\right) }}{\sup }\left( \underset{i=1}{\overset{n}{%
\sum }}\left( \left\vert \varphi (x_{i}^{1},...,x_{i}^{m})\right\vert
^{1-\sigma }\overset{m}{\underset{j=1}{\prod }}\left\Vert
x_{i}^{j}\right\Vert ^{\sigma }\right) ^{\frac{p}{1-\sigma }}\right) ^{\frac{%
1-\sigma }{p}}.
\end{equation*}

\noindent It is clear that
\begin{equation*}
\underset{\varphi \in B_{\mathcal{L}\left( X_{1},...,X_{m}\right) }}{\sup }%
\left( \overset{n}{\underset{i=1}{\sum }}\left\vert \varphi
(x_{i}^{1},...,x_{i}^{m})\right\vert ^{\frac{p}{1-\sigma }}\right) ^{\frac{%
1-\sigma }{p}}\leq \delta _{p\sigma }((x_{i}^{j})_{i=1}^{n}),
\end{equation*}
\noindent for all $(x_{i}^{j})_{i=1}^{n}\subset X_{j},$ $1\leq j\leq m.$

\begin{definition}
For $1\leq p<\infty $ and $0\leq \sigma $ $<1,$ a mapping $T\in \mathcal{L}%
(X_{1},...,X_{m};Y)$ is \textit{ Dimant strongly $(p,\sigma)$-continuous} if there is a
constant $C>0$ such that for any $x_{1}^{j},...,x_{n}^{j}\in X_{j},$ $1\leq
j\leq m$, we have

\begin{equation}
\left\Vert \left( T\left( x_{i}^{1},...,x_{i}^{m}\right) \right)
_{i=1}^{n}\right\Vert _{\frac{p}{1-\sigma }}\leq C \ \delta _{p\sigma
}((x_{i}^{j})_{i=1}^{n}).  \label{Ourdif}
\end{equation}
\end{definition}

\noindent The class of all Dimant strongly $(p,\sigma )$-continuous $m$%
-linear operators from $X_{1}\times ...\times X_{m}$ into $Y$, which is
denoted by $\mathcal{L}_{p}^{s,\sigma }\left( X_{1},...,X_{m};Y\right) $ is
a Banach space with the norm $\left\Vert T\right\Vert _{\mathcal{L}%
_{p}^{s,\sigma }}$ which is the smallest constant $C$ such that the
inequality (\ref{Ourdif}) holds.

\noindent For $\sigma =0,$ we have $\mathcal{L}_{p}^{s,0}\left(
X_{1},...,X_{m};Y\right) =\mathcal{L}_{p}^{s}\left( X_{1},...,X_{m};Y\right)
.$ Actually this vector space has to be thought of as an intermediate space between
the space of all strongly $p$-summing mappings ($\sigma =0$) and the whole
class of continuous $m$-linear mappings ($\sigma =1$).

\begin{theorem}
\label{dom23} Let $1\leq p\leq \infty $ and $0\leq \sigma <1.$ A $m$-linear
operator $T\in \mathcal{L} \left( X_{1},...,X_{m};Y\right)$ is Dimant strongly $%
(p,\sigma)$-continuous if and only if there is a constant $C>0$ and regular
Borel probability measure $\mu $ on $B_{\mathcal{L}\left(
X_{1},...,X_{m}\right)}$ so that for all $\left( x^{1},...,x^{m}\right) \in
X_{1}\times ....\times X_{m}$ the inequality

\begin{equation}
\left\Vert T(x^{1},...,x^{m})\right\Vert \leq C\left( \int_{B_{\mathcal{L}%
\left( X_{1},...,X_{m}\right) }}(\left\vert \phi
(x^{1},...,x^{m})\right\vert ^{1-\sigma }\overset{m}{\underset{j=1}{\prod }}%
\left\Vert x^{j}\right\Vert ^{\sigma })^{\frac{p}{1-\sigma }}d\mu (\phi
)\right) ^{\frac{1-\sigma }{p}},  \label{Ourdomin}
\end{equation}

is valid. Moreover, we have in this case
\begin{equation*}
\left\Vert T\right\Vert _{\mathcal{L}_{p}^{s,\sigma}}=\inf \left\{ C>0:\text{%
\textit{for all }}C\text{ satisfying the \textit{inequality} (\ref{Ourdomin})}
\right\}
\end{equation*}
\end{theorem}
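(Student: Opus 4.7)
The plan is to prove the two implications separately, with the necessity direction reduced to the general Pietsch Domination Theorem of \cite{BPR10} (or \cite{PeSaSe}), following the pattern already used in the proof of Theorem \ref{dom2} in this paper.

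For the sufficiency, I would argue by a direct calculation. Assume (\ref{Ourdomin}) holds for some $C>0$ and probability measure $\mu$. Given any finite family $(x_i^j)_{i=1}^n \subset X_j$, $j=1,\ldots,m$, apply (\ref{Ourdomin}) to each $(x_i^1,\ldots,x_i^m)$, raise to the power $p/(1-\sigma)$, sum over $i$, and swap sum with integral by Tonelli. Because $\mu$ is a probability measure, the resulting integral is dominated by the supremum over $\varphi\in B_{\mathcal{L}(X_1,\ldots,X_m)}$, which is exactly $\delta_{p\sigma}((x_i^j))^{p/(1-\sigma)}$. Extracting the $(1-\sigma)/p$-th root yields (\ref{Ourdif}) with the same $C$, so $\|T\|_{\mathcal{L}_p^{s,\sigma}}\leq C$.

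For the necessity, the key observation is that Dimant strongly $(p,\sigma)$-continuity is an instance of the $R$--$S$-abstract $q$-summing property with exponent $q:=p/(1-\sigma)$. I would take $E:=X_1\times\cdots\times X_m$, $\mathcal{H}:=\mathcal{L}(X_1,\ldots,X_m;Y)$, $G:=\mathbb{K}$, and $K:=B_{\mathcal{L}(X_1,\ldots,X_m)}$ endowed with the weak* topology (compact Hausdorff by Banach--Alaoglu), and define
\begin{equation*}
R(\varphi,(x^1,\ldots,x^m),b):=|\varphi(x^1,\ldots,x^m)|^{1-\sigma}\prod_{j=1}^m\|x^j\|^{\sigma}, \qquad S(T,(x^1,\ldots,x^m),b):=\|T(x^1,\ldots,x^m)\|.
\end{equation*}
Then (\ref{Ourdif}) is exactly the abstract $q$-summing inequality (\ref{cam-errado}) for these data. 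Applying \cite[Theorem 2.2]{BPR10} produces a regular Borel probability measure $\mu$ on $B_{\mathcal{L}(X_1,\ldots,X_m)}$ and a constant $C$ (as close as desired to $\|T\|_{\mathcal{L}_p^{s,\sigma}}$) for which $\|T(x^1,\ldots,x^m)\|\leq C(\int R(\varphi,(x^1,\ldots,x^m))^q d\mu(\varphi))^{1/q}$, which is precisely (\ref{Ourdomin}).

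The only delicate point, and the place where I expect to spend some attention, is verifying the standing hypotheses that the abstract Pietsch machinery imposes on $R$: namely, weak*-continuity of $\varphi\mapsto R(\varphi,(x^1,\ldots,x^m),b)$ for each fixed tuple. This follows immediately, because evaluation $\varphi\mapsto\varphi(x^1,\ldots,x^m)$ is weak*-continuous on $B_{\mathcal{L}(X_1,\ldots,X_m)}$, hence so are its absolute value, its $(1-\sigma)$-th power, and its product with the scalar $\prod_j\|x^j\|^{\sigma}$. The final claim about the norm equality $\|T\|_{\mathcal{L}_p^{s,\sigma}}=\inf\{C:(\ref{Ourdomin})\text{ holds}\}$ is obtained by tracking constants across both directions, as the sufficiency gives one inequality and the application of the general Pietsch theorem in the necessity direction yields the reverse one.
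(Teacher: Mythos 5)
Your argument is essentially the paper's: the paper likewise realizes Dimant strong $(p,\sigma)$-continuity as the $R$--$S$-abstract $\frac{p}{1-\sigma}$-summing property for exactly these choices of $E$, $\mathcal{H}$, $G$, $K$, $R$ and $S$, and concludes by \cite[Theorem 2.2]{BPR10}, whose ``if and only if'' also subsumes your direct sufficiency computation. The only detail to adjust is that $R$ and $S$ should carry the factor $|b|$ (the paper's $|\lambda|$), since the standing hypothesis $R(\varphi,x,\eta b)\le \eta R(\varphi,x,b)$ of the cited theorem cannot hold for a nonzero $R$ that does not depend on $b$.
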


\begin{proof}
For the proof we use the abstract version of Pietsch's domination theorem
presented by Botelho et al in \cite{BPR10}. Note that by choosing the
parameters

$\left\{
\begin{array}{l}
X=E=X_{1}\times ...\times X_{m} \\
\mathcal{H}=\mathcal{L}\left( ^{m}X_{1},...,X_{m};Y\right)  \\
x_{o}=(0,...,0) \\
K=B_{\mathcal{L}\left( X_{1},...,X_{m}\right) } \\
G=\mathbb{K} \\
R(\varphi ,x^{1},...,x^{m},\lambda )=\left\vert \lambda \right\vert
\left\vert \varphi (x^{1},...,x^{m})\right\vert ^{1-\sigma }\overset{m}{%
\underset{j=1}{\prod }}\left\Vert x_{i}^{j}\right\Vert ^{\sigma } \\
S(T,(x^{1},...,x^{m}),\lambda )=\left\vert \lambda \right\vert \left\Vert
T(x^{1},...,x^{m})\right\Vert.
\end{array}%
\right. $

\noindent We can easily conclude that $T:X_{1}\times ...\times
X_{m}\longrightarrow Y$ is Dimant strongly $(p,\sigma )$-continuous if
and only if $T$ is $R$-$S$- abstract $\frac{p}{1-\sigma }$-summing (see \cite%
[Definition 2.1]{BPR10}), and by \cite[Theorem 2.2]{BPR10} we recover (\ref%
{Ourdomin}).
\end{proof}

An immediate consequence of Theorem \ref{dom23} is the following corollary.

\begin{corollary}
Consider $0\leq \sigma <1$ and $1\leq p,q<\infty$ with  $p\leq q.$
If $T\in \mathcal{L}_{p}^{s,\sigma }\left( X_{1},...,X_{m};Y\right) $
\textit{then} $T\in \mathcal{L}_{q}^{s,\sigma }\left(
X_{1},...,X_{m};Y\right) $\textit{\ and }$\left\Vert T\right\Vert _{\mathcal{%
L}_{q}^{s,\sigma }}\leq \left\Vert T\right\Vert _{\mathcal{L}_{p}^{s,\sigma
}}$
\end{corollary}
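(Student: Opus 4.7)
The cleanest route is to invoke the integral domination of Theorem~\ref{dom23} rather than working directly with the defining sequence inequality, since the domination decouples the ``domain'' part (a single $L^{r}$-type seminorm against a probability measure) from the parameter $p$ through the exponent $r=p/(1-\sigma)$.

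Concretely, I would proceed as follows. Assume $T\in\mathcal{L}_{p}^{s,\sigma}(X_{1},\ldots,X_{m};Y)$ and fix $\varepsilon>0$. By Theorem~\ref{dom23} there exist a constant $C\leq \|T\|_{\mathcal{L}_{p}^{s,\sigma}}+\varepsilon$ and a regular Borel probability measure $\mu$ on $B_{\mathcal{L}(X_{1},\ldots,X_{m})}$ such that, for every $(x^{1},\ldots,x^{m})\in X_{1}\times\cdots\times X_{m}$,
\begin{equation*}
\|T(x^{1},\ldots,x^{m})\|\leq C\Bigl(\int_{B_{\mathcal{L}(X_{1},\ldots,X_{m})}} g(\phi)^{p/(1-\sigma)}\,d\mu(\phi)\Bigr)^{(1-\sigma)/p},
\end{equation*}
where $g(\phi):=|\phi(x^{1},\ldots,x^{m})|^{1-\sigma}\prod_{j=1}^{m}\|x^{j}\|^{\sigma}$. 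The crux is that $\mu$ is a \emph{probability} measure, so Lyapunov's inequality (equivalently, Jensen applied to $t\mapsto t^{q/p}$, which is convex because $q/p\geq 1$) yields the nesting
\begin{equation*}
\Bigl(\int g^{p/(1-\sigma)}\,d\mu\Bigr)^{(1-\sigma)/p}\leq \Bigl(\int g^{q/(1-\sigma)}\,d\mu\Bigr)^{(1-\sigma)/q}.
\end{equation*}
Composing the two inequalities produces exactly the domination of Theorem~\ref{dom23} with the exponent $q$, the same measure $\mu$ and the same constant $C$.

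Applying the reverse direction of Theorem~\ref{dom23} (now at level $q$) gives $T\in\mathcal{L}_{q}^{s,\sigma}(X_{1},\ldots,X_{m};Y)$ with $\|T\|_{\mathcal{L}_{q}^{s,\sigma}}\leq C\leq \|T\|_{\mathcal{L}_{p}^{s,\sigma}}+\varepsilon$. Letting $\varepsilon\to 0$ yields the desired norm inequality $\|T\|_{\mathcal{L}_{q}^{s,\sigma}}\leq \|T\|_{\mathcal{L}_{p}^{s,\sigma}}$. There is no real obstacle here: the entire argument is a one-line Lyapunov estimate. The only thing worth noting is that one must resist trying to prove the statement directly from the defining sequence inequality (which conflates $\ell^{p/(1-\sigma)}$ on the left with a supremum of an $\ell^{p/(1-\sigma)}$ norm on the right, and where the monotonicity directions do not line up so cleanly); passing through the probability-measure domination is what makes the estimate transparent.
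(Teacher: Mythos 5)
Your proposal is correct and is precisely the argument the paper intends: the corollary is stated as an ``immediate consequence'' of Theorem~\ref{dom23}, and the route through the integral domination plus Lyapunov's inequality for the probability measure $\mu$ (using $p/(1-\sigma)\leq q/(1-\sigma)$) is the standard way to make that immediacy explicit, including the norm estimate obtained by letting $\varepsilon\to 0$.
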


\begin{proposition}
Let $1\leq p<\infty ,0\leq \sigma <1$. Then
\begin{equation*}
\mathcal{L}_{\frac{p}{1-\sigma }}^{s}\left( X_{1},...,X_{m};Y\right) \subset
\mathcal{L}_{p}^{s,\sigma }\left( X_{1},...,X_{m};Y\right).
\end{equation*}
\end{proposition}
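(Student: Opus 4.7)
The plan is to show directly that the strongly $\frac{p}{1-\sigma}$-summing inequality dominates the Dimant strongly $(p,\sigma)$-continuous inequality with the same constant, by a pointwise comparison of the two sup-expressions over $B_{\mathcal{L}(X_1,\ldots,X_m)}$.

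Let $T\in \mathcal{L}_{p/(1-\sigma)}^{s}(X_1,\ldots,X_m;Y)$. By definition (applied to $\frac{p}{1-\sigma}$ in place of $p$ in \eqref{Difdim03}), there exists a constant $C>0$ such that for every choice of $x_1^j,\ldots,x_n^j\in X_j$, $j=1,\ldots,m$,
\begin{equation*}
\left\| \left( T(x_i^1,\ldots,x_i^m)\right)_{i=1}^n\right\|_{\frac{p}{1-\sigma}}
\leq C \sup_{\varphi \in B_{\mathcal{L}(X_1,\ldots,X_m)}} \left(\sum_{i=1}^n |\varphi(x_i^1,\ldots,x_i^m)|^{\frac{p}{1-\sigma}}\right)^{\frac{1-\sigma}{p}}.
\end{equation*}

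The key step is the following pointwise estimate. For any $\varphi$ with $\|\varphi\|\le 1$ we have $|\varphi(x^1,\ldots,x^m)|\le \prod_{j=1}^m \|x^j\|$. Since $\frac{p\sigma}{1-\sigma}\ge 0$, raising to this power and multiplying by $|\varphi(x^1,\ldots,x^m)|^p$ yields
\begin{equation*}
|\varphi(x^1,\ldots,x^m)|^{\frac{p}{1-\sigma}}
= |\varphi(x^1,\ldots,x^m)|^{p}\cdot|\varphi(x^1,\ldots,x^m)|^{\frac{p\sigma}{1-\sigma}}
\le \Bigl(|\varphi(x^1,\ldots,x^m)|^{1-\sigma}\prod_{j=1}^m\|x^j\|^{\sigma}\Bigr)^{\frac{p}{1-\sigma}}.
\end{equation*}
Summing over $i=1,\ldots,n$, taking the supremum over $\varphi \in B_{\mathcal{L}(X_1,\ldots,X_m)}$, and extracting the $\frac{1-\sigma}{p}$-th root gives
\begin{equation*}
\sup_{\varphi} \left(\sum_{i=1}^n |\varphi(x_i^1,\ldots,x_i^m)|^{\frac{p}{1-\sigma}}\right)^{\frac{1-\sigma}{p}}
\le \delta_{p\sigma}\bigl((x_i^j)_{i=1}^n\bigr).
\end{equation*}

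Combining this with the first displayed inequality we obtain
\begin{equation*}
\left\| \left( T(x_i^1,\ldots,x_i^m)\right)_{i=1}^n\right\|_{\frac{p}{1-\sigma}}\le C\,\delta_{p\sigma}\bigl((x_i^j)_{i=1}^n\bigr),
\end{equation*}
which is precisely \eqref{Ourdif}. Hence $T \in \mathcal{L}_p^{s,\sigma}(X_1,\ldots,X_m;Y)$ with $\|T\|_{\mathcal{L}_p^{s,\sigma}}\le \|T\|_{\mathcal{L}_{p/(1-\sigma)}^{s}}$. There is no genuine obstacle; the argument rests entirely on the elementary bound $|\varphi(x^1,\ldots,x^m)|\le \prod_j\|x^j\|$ coming from $\|\varphi\|\le 1$, together with the identity $\frac{p}{1-\sigma}=p+\frac{p\sigma}{1-\sigma}$.
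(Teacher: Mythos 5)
Your proof is correct and follows exactly the route the paper intends: the paper states the inequality $\sup_{\varphi}\bigl(\sum_{i}|\varphi(x_i^1,\ldots,x_i^m)|^{p/(1-\sigma)}\bigr)^{(1-\sigma)/p}\leq\delta_{p\sigma}\bigl((x_i^j)_{i=1}^n\bigr)$ as ``clear'' just before the definition and leaves the proposition as an immediate consequence, which is precisely what you prove. Your write-up simply supplies the elementary pointwise estimate $|\varphi(x^1,\ldots,x^m)|\leq\prod_j\|x^j\|$ that the paper omits, and the resulting norm bound $\|T\|_{\mathcal{L}_p^{s,\sigma}}\leq\|T\|_{\mathcal{L}_{p/(1-\sigma)}^{s}}$ is a correct bonus.
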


By \cite[Example 3.3]{CD03} there is a strongly $p$-summing bilinear
operator which is not weakly compact. Then the previous proposition proves
the existence of a Dimant strongly $(p,\sigma )$-continuous bilinear operator which
is not weakly compact.

The lack of factorization scheme for the class of Dimant strongly $(p,\sigma)$-continuous operators highlights the need of approaching the definition to a factorable context. A precedent was done first in \cite{RuSa}, where a general condition related to the existence of a factorization scheme was isolated.  Combining the ideas in \cite{RuSa} with those in \cite{Dim03}, the class of factorable strongly $p$-summing operators was introduced in \cite{PeRuSa}, that best inherits the spirit of absolutely summing linear operators. Following this scheme, we now combine Dimant strongly $(p,\sigma)$-continuous operators with the ideas in \cite{RuSa} to produce a class for which a factorization theorem occurs.

\begin{definition} \label{demul}
For $1\leq p<\infty $ and $0\leq \sigma $ $<1,$ a mapping $T\in \mathcal{L}%
(X_{1},...,X_{m};Y)$ is factorable strongly $(p,\sigma )$-continuous if
there is a constant $C>0$ such that for any natural numbers $n_{1},n_{2}$\
and vectors $x_{i,k}^{j}\in X_{j},$ $1\leq j\leq m,$ and scalars $\lambda
_{i}^{k},$ $i=1,...,n_{1},$ $k=1,...,n_{2}$ we have
\begin{equation*}
\left( \sum_{i=1}^{n_{1}}\left\Vert \sum_{k=1}^{n_{2}}\lambda
_{i}^{k}T(x_{i}^{1,k},...,x_{i}^{m,k}) \right\Vert ^{\frac{p}{1-\sigma }%
}\right) ^{\frac{1-\sigma }{p}}
\end{equation*}%
\begin{equation*}
\leq C.\sup_{\varphi \in B_{\mathcal{L}(X_{1}\times \cdots \times
X_{n})}}\left( \sum_{i=1}^{n_{1}}\left\vert \sum_{k=1}^{n_{2}} |\lambda
_{i}^{k}| |\varphi (x_{i}^{1,k},...,x_{i}^{m,k})|^{1-\sigma }\underset{j=1}{\overset%
{m}{\prod }}\Vert x_{i}^{j,k}\Vert ^{\sigma }\right\vert ^{\frac{p}{1-\sigma
}}\right) ^{\frac{1-\sigma }{p}}.
\end{equation*}
\end{definition}

\begin{remark}
1) It is clear that this definition is equivalent to saying that $T$ is
strongly $\Phi$-abstract $\frac{p}{1-\sigma }$-summing with the homogeneous function
$\Phi $  defined ---using our notation--- by%
\begin{equation*}
\Phi \left( \sum_{k=1}^{n}\lambda _{i}^{k}\left\langle x^{1,k}\otimes
...\otimes x^{m,k}, \cdot \right\rangle \right) = \inf \left\{ \sum_{k=1}^{n} |\lambda
_{i}^{k} | \left|\left\langle x^{1,k}\otimes ...\otimes x^{m,k}, \cdot \right\rangle
\right|^{1-\sigma }\underset{j=1}{\overset{m}{\prod }}\Vert x^{j,k}\Vert ^{\sigma } \right\},
\end{equation*}%
where the infimum is computed for all possible decompositions of the tensor
$$
\sum_{k=1}^{n}\lambda _{i}^{k} x^{1,k}\otimes
...\otimes x^{m,k}.
$$

2) Note that if we take $n_{2}=1$ and $\lambda _{i}^{1}=1$\ for all $%
i=1,...,n_{1}$ in the Definition \ref{demul} we find that $T$ is Dimant strongly $%
(p,\sigma )$-continuous $m$-linear mappings.
\end{remark}

From  Theorem \ref{domin} and Theorem \ref{facto} we obtain the following
characterization.

\begin{theorem}
Let $1\leq p<\infty $, $0\leq \sigma $ $<1$ and $T\in \mathcal{L%
}(X_{1},...,X_{m};Y)$. The following assertions are equivalent.

\begin{itemize}
\item[(1)] $T$ is factorable strongly $(p,\sigma )$-continuous.

\item[(2)] There is a constant $C>0$ and a regular Borel probability measure $\eta $ on $%
B_{(X_{1}\widehat{\otimes }_{\pi }\cdots \widehat{\otimes }_{\pi }X_{m})^{\ast }}$ such that for
all $x^{j,k}\in X_{j}$ and $\lambda ^{k}\in \mathbb{R}$, $j=1,...,m$, $%
k=1,...,n,$%
\begin{equation*}
\left\Vert \sum_{k=1}^{n}\lambda ^{k}T(x^{1,k},...,x^{m,k})\right\Vert
\end{equation*}%
\begin{equation*}
\leq C \left( \int_{B_{\mathcal{L}%
\left( X_{1},...,X_{m}\right) }}\left\vert \sum_{k=1}^{n} |\lambda ^{k}| |\varphi (x^{1,k}\otimes ...\otimes
x^{m,k})|^{1-\sigma }\underset{j=1}{\overset{m}{\prod }}\Vert x^{j,k}\Vert
^{\sigma }\right\vert ^{\frac{p}{1-\sigma }}d\eta \right) ^{\frac{1-\sigma }{%
p}}.
\end{equation*}

\item[(3)] The following diagram commutes
\end{itemize}
\end{theorem}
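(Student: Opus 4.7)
The plan is to realize this theorem as a direct specialization of the general machinery already established in Section 3.1: the factorable strongly $(p,\sigma)$-continuous class is nothing but the strongly $\Phi$-abstract $q$-summing class for the exponent $q=\frac{p}{1-\sigma}$ and the specific homogeneous map $\Phi$ introduced in part (1) of the preceding Remark. Once this identification is set up, (1) $\Leftrightarrow$ (2) will follow immediately from Theorem \ref{domin} and (1) $\Leftrightarrow$ (3) from Theorem \ref{facto}, with the factorization in (3) taking place through the domination space $(\widehat{V_m})_{\Phi}^{L^{p/(1-\sigma)}}$.

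First I would define $\Phi:V_m\to C(B_{(X_1\otimes_\pi\cdots\otimes_\pi X_m)^*})$ by
\[
\Phi(u)(\varphi):=\inf\left\{\sum_{k=1}^{n}|\lambda^k|\,|\varphi(x^{1,k},\ldots,x^{m,k})|^{1-\sigma}\prod_{j=1}^{m}\|x^{j,k}\|^{\sigma}\right\},
\]
the infimum running over all representations $u=\sum_{k=1}^{n}\lambda^k\langle x^{1,k}\otimes\cdots\otimes x^{m,k},\cdot\rangle$ in $V_m$. Well-definedness as a function on $V_m$ is built into the infimum (it depends only on $u$, not on the chosen representation). Positive homogeneity is immediate by absorbing $\alpha>0$ into the $|\lambda^k|$'s in each representation. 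Boundedness follows from picking the trivial single-term representation of $u=\langle x^1\otimes\cdots\otimes x^m,\cdot\rangle$, which yields $|\Phi(u)(\varphi)|\le\prod_j\|x^j\|$, uniformly in $\varphi\in B_{(X_1\otimes_\pi\cdots\otimes_\pi X_m)^*}$, so $\|\Phi(u)\|_{C(B_{(X_1\otimes_\pi\cdots\otimes_\pi X_m)^*})}\le\pi(u)$.

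Next I would verify that, with this $\Phi$ and $q:=\frac{p}{1-\sigma}$, the summability inequality in Definition 3.1 reduces exactly to the one in Definition \ref{demul}. The right-hand side of the strongly $\Phi$-abstract $q$-summing inequality for the generator $\sum_{k}\lambda_i^k\langle x_i^{1,k}\otimes\cdots\otimes x_i^{m,k},\cdot\rangle$ coincides (up to the infimum that only \emph{decreases} the bound and is absorbed into the supremum over $\varphi$) with the right-hand side displayed in Definition \ref{demul}, while the left-hand sides are identical once $p$ is replaced by $q$. This gives (1) $\Leftrightarrow$ ``$T$ is strongly $\Phi$-abstract $q$-summing''.

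At this point Theorem \ref{domin} applied to $\Phi$ and $q$ produces a regular Borel probability $\eta$ on $B_{(X_1\otimes_\pi\cdots\otimes_\pi X_m)^*}$ such that
\[
\Big\|\sum_{k=1}^n\lambda^kT(x^{1,k},\ldots,x^{m,k})\Big\|\le C\Big(\int\big|\Phi\big(\textstyle\sum_k\lambda^k\langle x^{1,k}\otimes\cdots\otimes x^{m,k},\cdot\rangle\big)(\varphi)\big|^{q}\,d\eta(\varphi)\Big)^{1/q}.
\]
Bounding the integrand by any single representation (in particular the given one) converts this into assertion (2); conversely (2) trivially implies the strongly $\Phi$-abstract $q$-summing inequality via Jensen/supremum-over-$\varphi$. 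Finally Theorem \ref{facto}, applicable because $\Phi$ is bounded and homogeneous, gives the commutative diagram
\[
\xymatrix{ X_{1}\times\cdots\times X_{m}\ar[rr]^{T}\ar@{>}[d]_{i_m} & & Y\\
V_m\ar@{>}[rr]^{i_{(\widehat{V_m})_{\Phi}^{L^{q}}}} & & (\widehat{V_m})_{\Phi}^{L^{q}}\ar@{>}[u]_{\widehat T}}
\]
which is the content of (3); the converse (3) $\Rightarrow$ (1) is the computation at the end of the proof of Theorem \ref{pcoro}.

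The main obstacle I anticipate is the careful reconciliation between the infimum defining $\Phi$ and the form of the inequality in Definition \ref{demul}: one must check that taking the infimum on the integrand side of Theorem \ref{domin} is harmless (it is, because any single representation provides an upper bound) and, in the other direction, that the $\Phi$-form of the summing inequality is at least as strong as the definition (which it is, after pulling $|\Phi(\cdot)|^{q}$ out of the sup over $\varphi$). Everything else is a routine rewriting of the general results of Section 3.1 in this particular case.
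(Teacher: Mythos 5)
Your overall strategy coincides with the paper's: the paper gives no written proof of this theorem, only the sentence that it follows ``from Theorem \ref{domin} and Theorem \ref{facto}'' together with the identification, made in part (1) of the preceding Remark, of the factorable strongly $(p,\sigma)$-continuous maps with the strongly $\Phi$-abstract $\frac{p}{1-\sigma}$-summing ones for the infimum-defined $\Phi$. Your proposal is a faithful expansion of exactly that plan, and the verifications of homogeneity, boundedness, (2)$\Rightarrow$(1) and the factorization step are fine.

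The one step that does not close as written is the one you yourself flag as ``the main obstacle'': the implication from Definition \ref{demul} to the strongly $\Phi$-abstract $q$-summing inequality ($q=\frac{p}{1-\sigma}$) when $\Phi$ carries the infimum over representations. Writing $F_{R}(\varphi)=\sum_k|\lambda^k|\,|\varphi(x^{1,k},\dots,x^{m,k})|^{1-\sigma}\prod_j\|x^{j,k}\|^{\sigma}$ for a representation $R$ of $u\in V_m$, Definition \ref{demul} applied to every choice of representations $R_i$ of the $u_i$ yields a bound by $\inf_{(R_1,\dots,R_{n_1})}\sup_{\varphi}\bigl(\sum_i F_{R_i}(\varphi)^q\bigr)^{1/q}$, whereas the $\Phi$-abstract inequality demands the possibly strictly smaller quantity $\sup_{\varphi}\bigl(\sum_i\bigl(\inf_{R_i}F_{R_i}(\varphi)\bigr)^q\bigr)^{1/q}$; since $\sup\inf\le\inf\sup$, ``absorbed into the supremum over $\varphi$'' is not an argument. (The paper's Remark asserts this equivalence with ``it is clear that'' and has the same lacuna; note also that a pointwise infimum of continuous functions need not lie in $C(B_{(X_1\otimes_\pi\cdots\otimes_\pi X_m)^*})$, so the inf-$\Phi$ is not obviously an admissible $\Phi$ at all.) The gap is harmless for the theorem itself because condition (2) is stated with the \emph{given} representation, not the infimum: to prove (1)$\Rightarrow$(2) you should run the $R$--$S$ domination machinery exactly as in the proof of Theorem \ref{domin}, taking $R$ to be the fixed-representation expression $F_R(\varphi)$ on tuples of data and bypassing the infimum-$\Phi$ entirely. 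The infimum then enters only in the quotient seminorm defining $L_{p,\sigma}(\eta)$ for assertion (3), where, exactly as in the step (3)$\Rightarrow$(4) of Theorem \ref{pcoro}, it causes no trouble. With that rerouting your proof is complete.
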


\begin{equation*}
\xymatrix{ X_{1}\times ...\times X_{m} \ar[rr]^{T} \ar@{>}[d]_{i} & & Y\\
V_{m} \ar@{>}[rr]^{i_{p,\sigma }} & & L_{p,\sigma }(\eta ) \ ,
\ar@{>}[u]_{\hat T}}
\end{equation*}%
where $L_{p,\sigma }(\eta )$ is the domination space defined by the
seminorm%
\begin{equation*}
\left\Vert \sum_{k=1}^{n}\lambda ^{k}\left\langle x^{1,k}\otimes
...\otimes x^{m,k}, \cdot \right\rangle \right\Vert _{p,\sigma }
\end{equation*}%
\begin{equation*}
=\inf \sum_{i=1}^{r}\left( \int_{B_{(X_{1}\widehat{\otimes }_{\pi }\cdots \widehat{\otimes }_{\pi }X_{m})^{\ast }}}\left\vert \sum_{k=1}^{m} |\lambda _{i}^{k}| \left| \left\langle
x_{i}^{1,k}\otimes ...\otimes x_{i}^{m,k}, \cdot \right\rangle \right|^{1-\sigma }%
\underset{j=1}{\overset{m}{\prod }}\Vert x_{i}^{j,k}\Vert ^{\sigma
}\right\vert ^{\frac{p}{1-\sigma }}\,d\eta \right) ^{\frac{1-\sigma }{p}},
\end{equation*}%
and the infimum is taken over all representations of the form
\begin{equation*}
\sum_{k=1}^{n}\lambda ^{k}x^{1,k}\otimes ...\otimes
x^{m,k}=\sum_{i=1}^{r}\sum_{k=1}^{m}\lambda _{i}^{k}x_{i}^{1,k}\otimes
...\otimes x_{i}^{m,k}.
\end{equation*}

\vspace{5mm}

\textbf{Acknowledgement.} This article has been finished during the short stay of E. Dahia to the Universidad Polit\'ecnica de Valencia in May 2015.

\end{document}